\theoremstyle{plain}
\newtheorem{thm}{Theorem}[section]
\newtheorem{rmk}[thm]{Remark}
\newtheorem{lem}[thm]{Lemma}
\newtheorem{cor}[thm]{Corollary}
\newtheorem{defn}[thm]{Definition}
\newtheorem{conj}[thm]{Conjecture}
\newcommand{\FF}{\mathbb{F}}
\newcommand{\QQ}{\mathbb{Q}}
\newcommand{\RR}{\mathbb{R}}
\newcommand{\ZZ}{\mathbb{Z}}
\newcommand{\frakg}{\mathfrak{g}}
\newcommand{\frakh}{\mathfrak{h}}
\renewcommand{\colon}{\!:}
\DeclareMathOperator{\an}{an}
\DeclareMathOperator{\coker}{coker}
\DeclareMathOperator{\cont}{ct}
\DeclareMathOperator{\Cont}{Ct}
\DeclareMathOperator{\Hom}{Hom}
\DeclareMathOperator{\id}{id}
\DeclareMathOperator{\image}{image}
\title{A note on the cohomology of $p$-adic analytic group actions}
\author{Annie Carter and Kiran S. Kedlaya}
\date{June 9, 2023}
\thanks{Thanks to Ruochuan Liu for suggesting Theorem~\ref{T:main}(b). Both authors were supported by NSF grant DMS-1802161. Kedlaya was additionally supported by NSF grant DMS-2053473 and by the UC San Diego Warschawski Professorship.}
\begin{document}

\begin{abstract}
We prove that given an analytic action of a compact $p$-adic Lie group on a Banach space over a field of positive characteristic, one can detect either the simultaneous vanishing or the simultaneous finite-dimensionality of all of the continuous cohomology groups from the corresponding statement for the restriction to a pro-$p$ procyclic subgroup.
We also formulate a conjecture generalizing this result, in which the base field is allowed to have mixed characteristic and the subgroup is allowed to be nilpotent.
Finally, we formulate an analogous conjecture about Lie algebra cohomology and relate this to a theorem of Kostant.
\end{abstract}

\maketitle

\section{Introduction}

Throughout this paper, fix a prime $p>0$. For $\Gamma$ a compact $p$-adic Lie group,
a detailed study of continuous $\Gamma$-actions on topological $\ZZ_p$-modules was made by Lazard \cite{Lazard}, including the crucial definition of an \emph{analytic} $\Gamma$-action. This concept (or more precisely an analogue thereof; see Definition~\ref{D:c-analytic})
was used in \cite[Theorem~4.1]{KedlayaHS} to prove a theorem asserting that
in certain cases, the vanishing of cohomology on a subgroup propagates to the whole group; this was called the \emph{Hochschild--Serre property} in \cite{KedlayaHS}, as in the case of a normal inclusion the statement in question is a formal consequence of the existence of the Lyndon--Hochschild--Serre spectral sequence.

The first goal of this paper is to prove an extension of \cite[Theorem~4.1]{KedlayaHS} suggested to us by Ruochuan Liu, in which vanishing of cohomology is replaced by finite dimensionality (Theorem~\ref{T:main}(b)).
The method borrows some ideas from Kiehl's use of the Cartan--Serre method for proving finiteness of higher direct images in rigid analytic geometry \cite{Kiehl}.
In the process, we correct some errors in the proof of \cite[Theorem~4.1]{KedlayaHS}.

The second goal is to introduce some conjectural extensions of these results,
in both positive characteristic (Conjecture~\ref{conj:char p analytic conjecture})
and mixed characteristic (Conjecture~\ref{conj:char 0 analytic conjecture}).
Whereas the aforementioned results only apply when the subgroup is procyclic, we conjecture that similar results holds whenever the subgroup is nilpotent.
The mixed characteristic case is expected to be equivalent to a corresponding conjecture on Lie algebra cohomology (Conjecture~\ref{conj:lie algebra cohom}), which is known in certain cases thanks to a theorem of Kostant (Theorem~\ref{thm:Kostant}).

\section{Some nonarchimedean functional analysis}

Throughout this section, fix a nonarchimedean field $K$ of arbitrary characteristics.

\begin{defn} \label{D:strict}
A morphism $f\colon V \to V'$ of Banach spaces over $K$ is \emph{strict} if the subspace and quotient topologies on the image of $f$ coincide. 
Equivalently, the induced map $V \to \image(f)$ admits a bounded set-theoretic section.

A complex $V^\bullet$ of Banach spaces over $K$ is \emph{strict} if each differential $d^i\colon V^i \to V^{i+1}$
is strict. Equivalently, if we equip each cohomology group $H^i(V^\bullet)$ with the quotient topology induced by the subspace topology on $\ker(d^i)$ (or for short the \emph{subquotient topology}), then $V^\bullet$ is strict if the $H^i(V^\bullet)$ are all Hausdorff (or equivalently, all complete).
\end{defn}

\begin{thm}[Banach open mapping theorem] \label{T:open mapping theorem}
Any surjective morphism of Banach spaces over $K$ is strict.
\end{thm}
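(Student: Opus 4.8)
The plan is to follow the classical Baire-category argument, adapted to the nonarchimedean setting, and in fact to produce directly the bounded set-theoretic section appearing in the second characterization of strictness in Definition~\ref{D:strict}. Let $f\colon V \to V'$ be surjective. Since $\image(f) = V'$ carries its given topology, the subspace topology on the image is just the ambient topology on $V'$, so strictness is equivalent to the quotient topology on $V'$ agreeing with the given one, i.e.\ to $f$ being an open map. Concretely, it suffices to find a constant $C > 0$ such that every $w \in V'$ has a preimage $v \in V$ with $|v| \le C|w|$; such a choice defines the desired bounded section. Write $B_r = \{v \in V : |v| \le r\}$ and similarly $B'_r \subseteq V'$.

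First I would run the Baire step. The space $V'$ is complete, hence a Baire space. Choosing a cofinal increasing sequence of radii $r_n \to \infty$ in the value group of $K$ (possible since the valuation is nontrivial) and using surjectivity, we get $V' = \bigcup_n \overline{f(B_{r_n})}$, so some $\overline{f(B_{r_N})}$ has nonempty interior. Translating by an interior point and rescaling by a suitable element of $K^\times$ — here I use that the norms on $V$ and $V'$ are homogeneous for the $K$-action — I would conclude that $\overline{f(B_1)}$ contains some ball $B'_\delta$ about the origin.

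The heart of the argument, and the step where completeness of $V$ is indispensable, is upgrading this to a statement about $f(B_1)$ itself by successive approximation. Given $w \in B'_\delta$, the inclusion $B'_\delta \subseteq \overline{f(B_1)}$ lets me choose $v_0 \in B_1$ with $|w - f(v_0)| \le \lambda\delta$ for a fixed $\lambda < 1$; applying the same estimate to the rescaled residual and iterating produces a sequence of corrections whose partial sums form a Cauchy sequence in $V$. By completeness these converge to some $v$ with $f(v) = w$, and the ultrametric inequality keeps $|v|$ bounded by a fixed multiple of $\delta$ (here the nonarchimedean setting is if anything more forgiving than the real one, since no geometric-series constant is incurred). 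Rescaling gives the uniform bound $|v| \le C|w|$ for all $w$, which is the required section. The main obstacle I anticipate is the bookkeeping in this iteration — in particular tracking the scalars used to rescale residuals when the value group of $K$ is discrete, so that the norm estimates close up cleanly; the Baire step, by contrast, is routine.
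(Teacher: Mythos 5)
Your proposal is correct: it is the standard Baire-category-plus-successive-approximation proof of the nonarchimedean open mapping theorem, and the minor bookkeeping issues you flag (nontriviality of the valuation for the cofinal radii, choosing the contraction factor $\lambda$ inside the value group $|K^\times|$ when rescaling residuals) are all routine to resolve. The paper itself gives no argument at all --- it simply cites \cite[Proposition~8.6]{SchneiderNFA} --- and your sketch is essentially the proof found in that reference, so there is nothing to compare beyond noting that you have filled in what the paper outsources.
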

\begin{proof}
See \cite[Proposition~8.6]{SchneiderNFA}.
\end{proof}
\begin{cor} \label{C:open mapping finite cokernel}
Any morphism of Banach spaces over $K$ with finite-dimensional cokernel is strict.
\end{cor}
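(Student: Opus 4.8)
The plan is to reduce the claim to the statement that $\image(f)$ is closed in $V'$, and then to obtain that closedness by enlarging $f$ to a surjection to which Theorem~\ref{T:open mapping theorem} applies. For the reduction, note that $\ker(f)$ is closed, so the quotient $V/\ker(f)$ is again a Banach space and the induced map $\bar{f}\colon V/\ker(f) \to V'$ is a continuous injection with image $\image(f)$. If $\image(f)$ is closed, then it is a Banach space for the subspace topology, so $\bar{f}$ is a continuous bijection of Banach spaces onto it; Theorem~\ref{T:open mapping theorem} then forces $\bar{f}$ to be a homeomorphism onto its image, i.e.\ the subspace and quotient topologies on $\image(f)$ coincide. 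By Definition~\ref{D:strict} this is precisely the strictness of $f$, so it suffices to prove that $\image(f)$ is closed.

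Next I would manufacture the auxiliary surjection. Choose $w_1,\dots,w_n \in V'$ whose images form a basis of the finite-dimensional vector space $\coker(f)$, and let $W \subseteq V'$ be their span, so that $V' = \image(f) \oplus W$ as abstract vector spaces and $W$ is finite-dimensional, hence a Banach space for the subspace topology. Form the Banach space $V \oplus W$ together with the continuous map $g\colon V \oplus W \to V'$ sending $(v,w) \mapsto f(v) + w$. By construction $g$ is surjective, so by Theorem~\ref{T:open mapping theorem} it is strict; being a surjective strict morphism, it is open, and therefore a topological quotient map.

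Finally I would exploit the quotient-map property to pin down the topology. Let $P\colon V' \to W$ be the linear projection onto $W$ with kernel $\image(f)$; the goal is to show $P$ is continuous. Composing with $g$ gives $P \circ g\colon (v,w) \mapsto w$, which is the continuous projection of $V \oplus W$ onto its finite-dimensional summand $W$. Since $g$ is a quotient map, the continuity of $P \circ g$ forces the continuity of $P$. As $W$ is finite-dimensional and hence Hausdorff, $\image(f) = P^{-1}(\{0\})$ is the preimage of a closed set under a continuous map, so $\image(f)$ is closed, which completes the reduction.

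I expect the main obstacle to be conceptual rather than computational: a subspace of finite codimension need \emph{not} be closed (the kernel of a discontinuous functional is a standard counterexample), so the closedness of $\image(f)$ cannot be deduced from the codimension alone and must instead be extracted from the continuity of $f$. The key device is the enlarged surjection $g$, which upgrades the purely algebraic splitting $V' = \image(f) \oplus W$ into a topological one by way of the open mapping theorem; everything else is a formal consequence of that theorem applied a second time to $\bar{f}$.
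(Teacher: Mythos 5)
Your proof is correct, and its core device is the same as the paper's: lift a basis of $\coker(f)$ to span a finite-dimensional complement $W$ of $\image(f)$ in $V'$, form the surjection $V \oplus W \to V'$, and feed it to Theorem~\ref{T:open mapping theorem}. Where you genuinely diverge is in how strictness of $f$ is then extracted. The paper uses the bounded-section characterization of strictness in Definition~\ref{D:strict}: it takes a bounded set-theoretic section of the auxiliary surjection and observes that its restriction to $\image(f)$ automatically lands in $V$ (because the complement meets $\image(f)$ trivially), which is already a bounded section of $V \to \image(f)$; this needs only one application of the open mapping theorem. You instead use the topological characterization: you show the projection $P\colon V' \to W$ annihilating $\image(f)$ is continuous via the quotient-map property of the auxiliary surjection, conclude that $\image(f) = \ker(P)$ is closed, and then apply the open mapping theorem a second time to the continuous bijection $V/\ker(f) \to \image(f)$. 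Your route is a bit longer, but it makes explicit two facts the paper never states --- that $\image(f)$ is closed and that $V' = \image(f) \oplus W$ is a \emph{topological} direct sum --- and it sidesteps the small verification the paper's restriction step quietly requires (that the restricted section really takes values in $V \oplus 0$). One minor point to note in your write-up: asserting that $W$ is a Banach space for the subspace topology uses the standard fact that finite-dimensional subspaces of a Banach space over a complete nonarchimedean field are closed with their canonical topology; the paper avoids invoking this by instead equipping $\coker(f)$ with its natural topology as a finite-dimensional $K$-vector space. Your closing remark correctly identifies the key subtlety: finite codimension alone does not give closedness, so the auxiliary surjection is genuinely needed.
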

\begin{proof}
Let $f\colon V \to V'$ be a morphism of Banach spaces such that $\coker(f)$ is finite-dimensional. By choosing a basis for $\coker(f)$ and then lifting representatives to $V'$, we may construct a $K$-linear morphism $g: \coker(f) \to V'$; using the natural topology on $\coker(f)$ as a $K$-vector space (rather than the quotient topology), $g$ becomes a morphism of Banach spaces over $K$.
The morphism $f \oplus g: V \oplus \coker(f) \to V'$ is then a surjective morphism of Banach spaces over $k$, and hence by
Theorem~\ref{T:open mapping theorem} is strict. We may now obtain a bounded set-theoretic section of $f$ by taking a bounded set-theoretic section of $f \oplus g$ and then noticing that its restriction to $\image(f)$ maps into $V$.
\end{proof}
\begin{cor} \label{C:finite-dimensional cohomology means strict}
Let $V^\bullet$ be a complex of Banach spaces over $K$. If $H^i(V^\bullet)$ is finite-dimensional for all $i$, then $V^\bullet$ is strict.
\end{cor}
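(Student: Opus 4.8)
The plan is to reduce strictness of each differential to Corollary~\ref{C:open mapping finite cokernel} by shrinking the codomain to a well-chosen closed subspace. Fix $i$ and write $d^i\colon V^i \to V^{i+1}$ for the differential. Since $d^{i+1}\circ d^i = 0$, the image $\image(d^i)$ is contained in $Z^{i+1} := \ker(d^{i+1})$, which, being the kernel of a continuous map between Banach spaces, is a closed subspace of $V^{i+1}$ and hence itself a Banach space over $K$.

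First I would regard $d^i$ as a morphism $V^i \to Z^{i+1}$ of Banach spaces. Its cokernel is precisely $Z^{i+1}/\image(d^i) = H^{i+1}(V^\bullet)$, which is finite-dimensional by hypothesis. Corollary~\ref{C:open mapping finite cokernel} then applies to show that $d^i\colon V^i \to Z^{i+1}$ is strict; that is, the subspace topology on $\image(d^i)$ induced from $Z^{i+1}$ agrees with the quotient topology induced from $V^i$.

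It remains to transfer this strictness back to the original codomain $V^{i+1}$. Because $Z^{i+1}$ carries the subspace topology from $V^{i+1}$ and $\image(d^i) \subseteq Z^{i+1}$, the subspace topology that $\image(d^i)$ inherits from $V^{i+1}$ coincides with the one it inherits from $Z^{i+1}$; meanwhile the quotient topology, as $V^i/\ker(d^i)$, does not depend on the codomain at all. Hence $d^i\colon V^i \to V^{i+1}$ is strict. Since $i$ was arbitrary, every differential is strict and so $V^\bullet$ is strict.

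I expect the only genuine subtlety to be the bookkeeping in the second and third paragraphs: one must notice that the relevant finite-dimensional cokernel appears only after cutting the target down to $\ker(d^{i+1})$ (the full cokernel $V^{i+1}/\image(d^i)$ need not be finite-dimensional), and then check that shrinking the codomain changes neither the quotient topology nor the subspace topology on the image. Everything else is a direct invocation of Corollary~\ref{C:open mapping finite cokernel}.
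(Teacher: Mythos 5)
Your proof is correct and is exactly the paper's argument: the paper's entire proof is ``Apply Corollary~\ref{C:open mapping finite cokernel} to the map $V^i \to \ker(d^{i+1})$,'' and your write-up simply makes explicit the two points the paper leaves implicit (that the cokernel of this restricted map is $H^{i+1}(V^\bullet)$, and that strictness into $\ker(d^{i+1})$ transfers to strictness into $V^{i+1}$ because neither the subspace nor the quotient topology on $\image(d^i)$ changes).
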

\begin{proof}
Apply Corollary~\ref{C:open mapping finite cokernel} to the map $V^i \to \ker(d^{i+1})$.
\end{proof}

\begin{defn}
A Banach space $V$ over $K$ is \emph{separable} if it contains a dense subspace of at most countable dimension.
\end{defn}

\begin{rmk} \label{R:separable union}
A commonly used observation is that if $V$ is a Banach space over $K$ and $W$ is a $K$-vector subspace of $V$ which is dense, then $V$ is the union
(not just the completed union) of the completions of all subspaces of $W$ of at most countable dimension. A similar assertion in algebraic geometry is that every ring is the union of $\ZZ$-subalgebras of finite type.
\end{rmk}

\begin{lem} \label{L:basis}
Let $V$ be a separable Banach space over $K$ of dimension $n$.
\begin{enumerate}
    \item[(a)] If $n < \infty$, then $V \cong K^n$.
    \item[(b)] If $n = \infty$, then $V$ is isomorphic to the space of null sequences in $K$
    equipped with the supremum norm.
\end{enumerate}
\end{lem}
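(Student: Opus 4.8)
The plan is to treat the two cases separately, with part~(a) a standard fact about finite-dimensional spaces and part~(b) carrying all of the content through the construction of an orthogonal-type basis.

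For (a), I would use that any finite-dimensional normed vector space over the complete field $K$ carries a unique Banach topology and is, after a suitable choice of basis, isomorphic to $K^n$ with the supremum norm. The quickest route is induction on $n$: the case $n=1$ is immediate, and for the inductive step one uses that a finite-dimensional subspace is complete, hence closed, so that the coordinate projections are bounded and the norm is equivalent to the maximum of the coordinate norms (completeness of $K$ is exactly what is needed here). This is \cite[Proposition~4.13]{SchneiderNFA}; note that separability is automatic in finite dimension.

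For (b), the strategy is to produce, for a fixed $t\in(0,1)$, a $t$-orthogonal basis $(e_n)_{n\ge 1}$ of $V$ — a sequence with dense span satisfying $\|\sum_n c_n e_n\|\ge t\max_n|c_n|\,\|e_n\|$ for every null sequence $(c_n)$ — and then to check that such a basis yields a topological isomorphism $V\cong c_0(K)$. Granting the basis, the map $\Phi\colon c_0(K)\to V$, $(c_n)\mapsto\sum_n c_n e_n$, is well defined and bounded because the ultrametric inequality gives $\|\Phi((c_n))\|\le\max_n|c_n|\,\|e_n\|$, while $t$-orthogonality gives the reverse bound $\|\Phi((c_n))\|\ge t\max_n|c_n|\,\|e_n\|$. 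After rescaling each $e_n$ by a scalar so that $\|e_n\|$ lies in a fixed bounded band of $\RR_{>0}$ (possible since the value group is nontrivial), both bounds become comparable to $\max_n|c_n|$, so $\Phi$ is a bounded linear bijection onto a closed — because $\Phi$ is bounded below — and dense, hence full, subspace, with bounded inverse.

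The heart of the argument, and the step I expect to be the main obstacle, is constructing the $t$-orthogonal basis from the hypothesis that $V$ contains a dense subspace $W$ with countable basis $w_1,w_2,\dots$. I would build $(e_n)$ inductively so that $\langle e_1,\dots,e_n\rangle=\langle w_1,\dots,w_n\rangle=:E_n$ for each $n$ (discarding any $w_n$ dependent on its predecessors). Given $e_1,\dots,e_n$ and a vector $w_{n+1}\notin E_n$, I choose $e_{n+1}=w_{n+1}-u$ with $u\in E_n$ whose distance $\|w_{n+1}-u\|$ is within a factor $t^{-1}$ of the infimal distance $\mathrm{dist}(w_{n+1},E_n)$; this infimum is positive because $E_n$, being finite-dimensional, is closed by part~(a), and translation-invariance of the distance then gives $\mathrm{dist}(e_{n+1},E_n)\ge t\,\|e_{n+1}\|$. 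The remaining purely ultrametric lemma is that this one-sided distance condition for every $n$ forces the full sequence to be $t$-orthogonal (up to replacing $t$ by $t^2$ in the degenerate case where two partial sums have equal norm), which is harmless since $t$ may be taken arbitrarily close to $1$. The subtle point worth flagging is that over a field that is not spherically complete the infimal distance need not be attained, which is precisely why one cannot in general achieve $t=1$ and why the resulting isomorphism $V\cong c_0(K)$ is only topological rather than isometric.
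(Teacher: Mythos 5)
Your proposal is correct, but note that the paper does not actually prove this lemma: its ``proof'' is a citation to \cite[Proposition~10.4]{SchneiderNFA} and \cite[Lemma~1.3.8]{KedlayaBook}. What you have written is essentially a reconstruction of the argument in those references: part~(a) by equivalence of norms in finite dimension over a complete field, and part~(b) by inductively extracting a $t$-orthogonal basis from a countable dense subspace (using that finite-dimensional subspaces are closed, so the distance to $E_n$ is positive and nearly attained), rescaling by powers of a uniformizing element so the norms lie in a bounded band, and then checking that the resulting map $c_0(K)\to V$ is bounded, bounded below, and has closed dense image. Your two points of care are exactly the right ones: the inductive one-sided distance condition degrades the orthogonality constant (your $t^2$ versus $t$), which is harmless because only a topological isomorphism is claimed; and over a field that is not spherically complete one cannot take $t=1$, so no isometric statement should be expected. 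In short, the proposal supplies a correct self-contained proof of a statement the paper handles by reference, and it buys the reader independence from the cited texts at the cost of about a page of standard nonarchimedean functional analysis.
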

\begin{proof}
See \cite[Proposition~10.4]{SchneiderNFA} or \cite[Lemma~1.3.8]{KedlayaBook}.
\end{proof}
\begin{cor} \label{C:kernel of base extension}
Let $K \to K'$ be an inclusion of nonarchimedean fields.
Let $V \to V'$ be a morphism of Banach spaces over $K$.
\begin{enumerate}
    \item[(a)]
The natural map
\[
\ker(V \to V') \mathbin{\widehat{\otimes}_K} K' \to \ker(V \mathbin{\widehat{\otimes}_K} K' \to V' \mathbin{\widehat{\otimes}_K} K')
\]
is an isomorphism of Banach spaces over $K'$. 
\item[(b)]
If $V \to V'$ is strict, then the natural map
\[
\coker(V \to V') \mathbin{\widehat{\otimes}}_K K' \to \coker(V \mathbin{\widehat{\otimes}}_K K' \to V' \mathbin{\widehat{\otimes}}_K K')
\]
is an isomorphism of Banach spaces over $K'$.
\item[(c)]
The map $V \to V'$ is strict if and only if $V \mathbin{\widehat{\otimes}}_K K' \to V' \mathbin{\widehat{\otimes}}_K K'$ is strict.
\end{enumerate}
\end{cor}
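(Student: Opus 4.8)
The plan is to isolate a single engine from which all three parts follow: the functor $-\widehat{\otimes}_K K'$ preserves kernels of arbitrary morphisms, and preserves cokernels and images of \emph{strict} morphisms. To make this concrete I would first choose an orthonormal basis $(e_i)_{i\in I}$ of $K'$ as a Banach space over $K$, so that for any $K$-Banach space $V$ there is a natural identification $V \widehat{\otimes}_K K' \cong c_0(I,V)$, the space of families $(v_i)_{i\in I}$ in $V$ with $\|v_i\|\to 0$ equipped with the norm $\max_i\|v_i\|$; under this identification a morphism $f\colon V\to V'$ induces the coordinatewise map $(v_i)\mapsto (f(v_i))$. Existence of such a basis is where Lemma~\ref{L:basis} enters: for separable $K'$ it gives $K'\cong c_0(I)$ with $I$ countable directly, and I would reduce the general case to the separable one using Remark~\ref{R:separable union}, since every element of $V\widehat{\otimes}_K K'$ involves only countably many elements of $K'$ and hence lies in $V\widehat{\otimes}_K K'_0$ for a separable closed subspace $K'_0\subseteq K'$.

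Granting the coordinate description, part (a) is immediate and unconditional: a family $(v_i)\in c_0(I,V)$ is killed by the coordinatewise $f$ if and only if each $v_i$ lies in $\ker(f)$, so $\ker(f\widehat{\otimes}_K K') = c_0(I,\ker f) = \ker(f)\widehat{\otimes}_K K'$, the natural map being the identity; being a bounded bijection of Banach spaces it is an isomorphism by Theorem~\ref{T:open mapping theorem}. For part (b) I would first record the reformulation (a consequence of the open mapping theorem) that $f$ is strict precisely when $\image(f)$ is closed. Strictness then supplies a bounded lifting: given $(w_i)\in c_0(I,\image f)$ with $w_i=f(u_i)$, one may choose $u_i$ with $\|u_i\|\le C\|w_i\|$, so that $(u_i)\in c_0(I,V)$ and hence $\image(f\widehat{\otimes}_K K')=c_0(I,\image f)$. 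The same bounded lifting identifies $c_0(I,V')/c_0(I,\image f)$ with $c_0(I, V'/\image f)$, which is exactly the asserted isomorphism on cokernels.

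For part (c), the identity $\image(f\widehat{\otimes}_K K')=c_0(I,\image f)$ just established shows that if $f$ is strict then its base change has closed image and is therefore strict, giving the ``only if'' direction. For ``if'', I would arrange $1$ to be a basis vector $e_{i_0}$ and exploit the resulting closed embedding $V'\hookrightarrow c_0(I,V')$, $v\mapsto v e_{i_0}$. A direct coordinate check gives $\image(f\widehat{\otimes}_K K')\cap V'e_{i_0} = (\image f)e_{i_0}$ with no strictness hypothesis, so if $\image(f\widehat{\otimes}_K K')$ is closed then its intersection with the closed subspace $V'e_{i_0}$ is closed, forcing $\image(f)$ closed and $f$ strict.

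The main obstacle is not the coordinate bookkeeping but securing the orthonormal basis of $K'$ and, relatedly, justifying the reduction to separable $K'_0$: one must know that $V\widehat{\otimes}_K K'_0\to V\widehat{\otimes}_K K'$ is a closed embedding and that these exhaust $V\widehat{\otimes}_K K'$ as $K'_0$ ranges over the separable closed subspaces. This is essentially a flatness statement for completed tensor products, and it is the only place where the finer structure theory of $K$-Banach spaces (Lemma~\ref{L:basis} together with Remark~\ref{R:separable union}) is genuinely needed; once it is in hand, all three parts collapse to the transparent $c_0$-coordinate computations above.
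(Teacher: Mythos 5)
Your proposal takes essentially the same route as the paper: its proof is exactly the two-step reduction you describe, namely invoke Remark~\ref{R:separable union} to reduce to the case of a separable (countably generated) $K'$, then apply Lemma~\ref{L:basis} to $K'$ viewed as a Banach space over $K$ to obtain the null-sequence coordinate description $V \mathbin{\widehat{\otimes}}_K K' \cong c_0(I,V)$, from which all three parts follow coordinatewise. Your write-up simply makes explicit the checks the paper leaves to the reader (the strict-equals-closed-image reformulation, the bounded lifting for (b), and the $e_{i_0}$ trick for the ``if'' direction of (c)), and you correctly flag the closed-embedding/exhaustion step of the reduction as the one point where the structure theory is genuinely used.
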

\begin{proof}
By Remark~\ref{R:separable union} we may reduce to the case where $K'$ is the completion of a countably generated field extension of $K$.
The claim then follows by applying Lemma~\ref{L:basis} to $K'$ viewed as a Banach space over $K$.
(Compare \cite[Lemma~2.2.9]{KedlayaLiu}.)
\end{proof}

\begin{lem} \label{L:cohomology after base change}
Let $K \to K'$ be an inclusion of nonarchimedean fields.
Let $V^\bullet$ be a strict complex of Banach spaces over $K$. 
Then for each $i$, the induced map 
\[
H^i(V^\bullet) \mathbin{\widehat{\otimes}}_K K' \to H^i(V^\bullet \mathbin{\widehat{\otimes}}_K K')
\]
is an isomorphism of Banach spaces over $K'$.
\end{lem}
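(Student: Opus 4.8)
The plan is to express the cohomology of $V^\bullet$ through the spaces of cocycles $Z^i := \ker(d^i)$ and coboundaries $B^i := \image(d^{i-1})$, and to commute each of these constructions with $\widehat{\otimes}_K K'$ by repeated appeal to Corollary~\ref{C:kernel of base extension}. The strictness hypothesis is exactly what makes this work: since $d^{i-1}$ is strict, the continuous bijection $V^{i-1}/Z^{i-1} \to B^i$ induced by $d^{i-1}$ is a topological isomorphism, so $B^i$ is complete and hence closed in $V^i$ (a fortiori in $Z^i$), and the subquotient topology makes $H^i(V^\bullet) = Z^i/B^i$ a genuine Banach space. Thus the two short exact sequences $0 \to Z^{i-1} \to V^{i-1} \to B^i \to 0$ and $0 \to B^i \to Z^i \to H^i(V^\bullet) \to 0$ are strict, and I would feed their constituent maps into Corollary~\ref{C:kernel of base extension}.

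Write $W^\bullet := V^\bullet \widehat{\otimes}_K K'$ with differentials $\delta^i := d^i \widehat{\otimes} \id$; by Corollary~\ref{C:kernel of base extension}(c) each $\delta^i$ is again strict, so $H^i(W^\bullet)$ is the honest quotient $\ker(\delta^i)/\image(\delta^{i-1})$. First, Corollary~\ref{C:kernel of base extension}(a) applied to $d^i$ identifies $Z^i \widehat{\otimes}_K K'$ with $\ker(\delta^i)$, compatibly with the inclusions into $W^i$; that is, cocycles commute with base change. Next I would handle coboundaries. Viewing $B^i$ as the cokernel of the strict closed inclusion $Z^{i-1} \hookrightarrow V^{i-1}$ (via the identification $B^i \cong V^{i-1}/Z^{i-1}$ above), Corollary~\ref{C:kernel of base extension}(b) identifies $B^i \widehat{\otimes}_K K'$ with $\coker(Z^{i-1}\widehat{\otimes}_K K' \to V^{i-1}\widehat{\otimes}_K K')$. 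Combining the previous kernel computation with the strictness of $\delta^{i-1}$, this last cokernel is $W^{i-1}/\ker(\delta^{i-1}) \cong \image(\delta^{i-1})$, so $B^i \widehat{\otimes}_K K'$ is identified with the coboundaries of $W^\bullet$, again compatibly inside $W^i$.

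With cocycles and coboundaries both commuting with base change, I would finish by applying Corollary~\ref{C:kernel of base extension}(b) to the strict closed inclusion $B^i \hookrightarrow Z^i$, whose cokernel is $H^i(V^\bullet)$: this yields $H^i(V^\bullet) \widehat{\otimes}_K K' \cong \coker(B^i\widehat{\otimes}_K K' \to Z^i \widehat{\otimes}_K K')$, which by the two identifications above is exactly $\ker(\delta^i)/\image(\delta^{i-1}) = H^i(W^\bullet)$. I expect the main work to be not any single estimate but the bookkeeping: one must check at each stage that the isomorphisms produced by Corollary~\ref{C:kernel of base extension} are compatible with the evident inclusions of subspaces of $W^\bullet$, so that the composite isomorphism coincides with the natural map of the statement (the one sending the class of a cocycle $z$ to the class of $z \otimes 1$). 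Keeping track of these compatibilities, while systematically invoking strictness so that every cokernel in sight is a Hausdorff Banach space, is the only delicate point.
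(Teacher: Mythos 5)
Your proposal is correct and follows essentially the same route as the paper: both identify cocycles with base-changed cocycles via Corollary~\ref{C:kernel of base extension}(a), identify coboundaries via strictness together with Corollary~\ref{C:kernel of base extension}(b), and then pass to the quotient. The only difference is one of exposition: your final application of Corollary~\ref{C:kernel of base extension}(b) to the strict inclusion $B^i \hookrightarrow Z^i$ makes explicit the concluding step that the paper compresses into ``this yields the desired result.''
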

\begin{proof}
Let $d^i\colon V^i \to V^{i+1}$ be the differential
and write $d^i \mathbin{\widehat{\otimes}}_K K'\colon V^i \mathbin{\widehat{\otimes}}_K K' \to V^{i+1} \mathbin{\widehat{\otimes}}_K K'$ for the induced differential. By Corollary~\ref{C:kernel of base extension}(a), the natural map
\[
\ker(d^i) \mathbin{\widehat{\otimes}}_K K' \to \ker(d^i \mathbin{\widehat{\otimes}}_K K')
\]
is an isomorphism of Banach spaces over $K'$. By the strictness condition and Corollary~\ref{C:kernel of base extension}(a,b),
the natural map
\[
\image(d^{i-1}) \mathbin{\widehat{\otimes}}_K K' \to \image(d^{i-1} \mathbin{\widehat{\otimes}}_K K')
\]
is an isomorphism of Banach spaces over $K'$.
We thus obtain isomorphisms
\begin{align*}
(V^{i-1} \mathbin{\widehat{\otimes}}_K K') / \ker(d^{i-1} \mathbin{\widehat{\otimes}}_K K') &\cong V^{i-1}/\ker(d^{i-1}) \mathbin{\widehat{\otimes}}_K K'\\
&\cong \image(d^{i-1}) \mathbin{\widehat{\otimes}}_K K' \\
&     \cong \image(d^{i-1} \mathbin{\widehat{\otimes}}_K K').
\end{align*}
This yields the desired result.
\end{proof}

\begin{lem} \label{L:split inclusion}
Let $V$ be a separable Banach space over $K$.
Let $V_0$ be a closed subspace of $V$.
\begin{enumerate}
\item[(a)]
The Banach space $V_0$ over $K$ is also separable.
\item[(b)]
There exists a closed subspace $V_1$ of $V$ such that the induced map $V_0 \oplus V_1 \to V$
is an isomorphism.
\end{enumerate}
\end{lem}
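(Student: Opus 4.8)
The plan is to reduce to an explicit model for $V$ and then build an orthonormal basis adapted to $V_0$. By Lemma~\ref{L:basis}, we may assume either $V \cong K^n$, in which case both assertions are immediate from finite-dimensional linear algebra (every subspace is closed and complemented), or $V$ is the space $c_0 = c_0(K)$ of null sequences with the supremum norm, with standard basis $e_1, e_2, \dots$ and coordinate projections $P_N$ onto $\mathrm{span}(e_1, \dots, e_N)$ satisfying $\|P_N\| \le 1$ and $P_N \to \mathrm{id}$ pointwise. The technical heart of both parts is the construction, for a fixed real number $t$ with $0 < t < 1$, of a \emph{$t$-orthogonal Schauder basis} of $V_0$: a finite or countable sequence $f_1, f_2, \dots \in V_0$ whose closed span is $V_0$ and for which $\|\sum_k a_k f_k\| \ge t \max_k |a_k| \|f_k\|$ for all null sequences $(a_k)$.

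For (a), I would produce such a sequence by a Gram--Schmidt/echelon process against the standard basis. Assigning to each nonzero $v = \sum_i a_i e_i \in V_0$ its order $\mathrm{ord}(v) = \min\{i : a_i \ne 0\}$, the set of orders occurring in $V_0$ is a subset $J \subseteq \{1, 2, \dots\}$, hence countable; for each $j \in J$ I would select $f_j \in V_0$ of order $j$ and, using the projections $P_N$ together with successive approximation, arrange that the leading coefficient nearly realizes the norm, i.e.\ $|a_j| \ge t\|f_j\|$. The resulting countable family has dense span in $V_0$: the order-reduction algorithm expresses any $v \in V_0$ as a convergent series in the $f_j$, where convergence is guaranteed precisely by the uniform constant $t$ controlling the norms of the partial remainders. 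Thus $V_0$ contains a dense subspace of at most countable dimension and is separable.

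For (b), I would extend the $t$-orthogonal basis $\{f_k\}$ of $V_0$ to a $t$-orthogonal basis $\{f_k\} \cup \{g_l\}$ of all of $V$, applying the same process to the standard basis of $V$ while reducing modulo the already-chosen $f_k$. Setting $V_1$ to be the closed span of the $\{g_l\}$, the map $V_0 \oplus V_1 \to V$ is a continuous bijection. The point of the $t$-orthogonality is that the projection of $V$ onto $V_0$ along $V_1$ (extract the $f_k$-coefficients) is then bounded, with norm at most $t^{-1}$; alternatively, continuity of the inverse follows formally from Theorem~\ref{T:open mapping theorem} applied to the continuous surjection $V_0 \oplus V_1 \to V$, so it suffices to know that $V_1$ is closed and that the map is bijective.

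The main obstacle is the orthogonalization step itself. Over a field $K$ that is not spherically complete, Hahn--Banach can fail and closed subspaces need not admit \emph{exact} orthogonal complements, so one cannot hope to take $t = 1$; the construction must tolerate the loss captured by an arbitrary $t < 1$, and it is exactly this uniform constant that forces the successive remainders to form null sequences and thereby guarantees both the convergence of the defining series in (a) and the boundedness of the projection in (b). Verifying that the approximating vectors can always be chosen inside $V_0$ (respectively inside $V$), and that the process converges, is the one genuinely nonarchimedean input.
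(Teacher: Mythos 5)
Your reduction to $V \cong c_0$ and your finite-dimensional case are fine, but the core of the argument --- the echelon construction of the $f_j$ and the density claim --- breaks down, and not merely for reasons of constants. Two concrete failures. First, the selection criterion ``$|a_j| \ge t\|f_j\|$ for some $f_j \in V_0$ of order $j$'' can be unsatisfiable: if $V_0$ is the line spanned by $e_1 + \lambda e_2$ with $|\lambda| > t^{-1}$, then every nonzero $v \in V_0$ has order $1$ and leading coefficient of absolute value $\|v\|/|\lambda| < t\|v\|$; the ratio is an invariant of the subspace, so no amount of successive approximation produces a valid $f_1$. Second, and decisively, even when the criterion is satisfied with the best possible constants, it does not yield density. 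Take $V_0 = V = c_0$ and $f_j = e_j - e_{j+1}$. Each $f_j$ has order $j$, norm $1$, leading coefficient of absolute value exactly $\|f_j\|$, and the family is even $1$-orthogonal (telescoping gives $|a_j| \le \max\bigl(|a_1|, \max_k |a_k - a_{k-1}|\bigr) = \|\sum_k a_k f_k\|$). Yet the closed span of the $f_j$ lies in the kernel of the continuous functional $(a_i)_i \mapsto \sum_i a_i$, a proper closed hyperplane not containing $e_1$. Running your order-reduction on $x = e_1$ produces remainders $r_n = e_{n+1}$: their orders tend to infinity but their norms are identically $1$, so the series never converges to $x$. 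The unstated principle your convergence claim relies on --- that in $c_0$ increasing order plus bounded norm forces convergence to zero --- is false, and this is exactly where the ``uniform constant $t$'' does no work. (There is also a circularity lurking in (a): Gram--Schmidt procedures that guarantee dense span must start from a countable dense subset of $V_0$, which is precisely what (a) asserts exists.)

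The paper sidesteps all of this by never selecting anything inside $V_0$: it works on the quotient side, where the analogous step is always possible. Since $V/V_0$ is trivially separable (the image of a dense countable-dimensional subspace is dense and of countable dimension), Lemma~\ref{L:basis} applies to $V/V_0$; one lifts the standard basis vectors $e_i \in V/V_0$ to $v_i \in V$ with $\|v_i\| \le (1+\epsilon)\|e_i\|$ --- lifting along a quotient map with $(1+\epsilon)$ norm control is possible by the very definition of the quotient norm, in contrast to your subspace-side selection --- and the closed span $V_1$ of the $v_i$ then maps isomorphically onto $V/V_0$, because the composite $K^\infty \to V_1 \to V \to V/V_0$ is the chosen isomorphism. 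This gives (b), with the topological isomorphism $V_0 \oplus V_1 \cong V$ coming from Theorem~\ref{T:open mapping theorem} as in your fallback remark. Part (a) is then \emph{deduced} from (b), not proved first: $V_0 \cong V/V_1$ is a quotient of $V$, hence separable. Note the direction of the logic is the reverse of yours, and necessarily so: separability passes trivially to quotients but not (by any direct argument of the kind you propose) to closed subspaces. If you want to salvage your write-up, reorganize it around the quotient $V/V_0$ in this way.
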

\begin{proof}
Since $V$ is separable, so is $V/V_0$.
In particular, we may apply Lemma~\ref{L:basis} to fix an isomorphism of $V/V_0$ with the Banach space of finite or countable null sequences over $K$ equipped with the supremum norm. We treat 
only the countable case, as the finite case is similar but easier.

Let $K^\infty$ denote the space of countable null sequences.
Let $e_1, e_2, \ldots \in V/V_0$ be the vectors corresponding to the standard basis vectors of $K^\infty$ via the chosen isomorphism $K^\infty \to V/V_0$. Fix $\epsilon> 0$ and lift each $e_i$ to an element $v_i \in V$ whose norm is at most $1+\epsilon$ times the quotient norm of $e_i$. Let $V_1$ be the completion of the span of the $v_i$; then the composition
\[
K^\infty \to V_1 \to V \to V/V_0,
\]
where the first map is $(a_i)_i \mapsto \sum_{i=1}^\infty a_i v_i$,
is our chosen isomorphism $(a_i)_i \mapsto \sum_{i=1}^\infty a_i e_i$. In particular, $K^\infty \to V_1 \to V$ is a strict inclusion, so $K^\infty \to V_1$ is an isomorphism, as then is $V_1 \to V/V_0$. This yields (b); since this in turn yields an isomorphism $V/V_1 \cong V_0$, we also deduce (a).
\end{proof}

\begin{cor} \label{C:split surjection}
Let $f\colon V \to V'$ be a morphism of separable Banach spaces with zero (resp. finite-dimensional) cokernel.
Then there exists a morphism $g\colon V' \to V$ such that $f \circ g - \id_{V'}$ is zero (resp. of finite rank).
If in addition $f$ has finite-dimensional kernel, then we may also ensure that $g \circ f - \id_V$ is of finite rank.
\end{cor}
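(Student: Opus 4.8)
The plan is to reduce everything to the case where $f$ is surjective, and there to produce an honest section by splitting off the kernel. The two tools that do the work are Lemma~\ref{L:split inclusion}, which lets us split any closed subspace of a separable Banach space, and the open mapping theorem (Theorem~\ref{T:open mapping theorem}), which promotes a continuous bijection of Banach spaces to an isomorphism.

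First I would treat the surjective case (zero cokernel). The kernel $\ker(f)$ is a closed subspace of the separable space $V$, so by Lemma~\ref{L:split inclusion}(b) we may choose a closed complement $V_1$ with $\ker(f) \oplus V_1 \cong V$. The restriction $f|_{V_1}\colon V_1 \to V'$ is then a continuous bijection, hence an isomorphism of Banach spaces by Theorem~\ref{T:open mapping theorem}; composing its inverse with the inclusion $V_1 \hookrightarrow V$ yields $g\colon V' \to V$ with $f \circ g = \id_{V'}$ exactly. Tracing through the direct sum decomposition, $g \circ f$ is precisely the projection of $V$ onto $V_1$ along $\ker(f)$, so $g \circ f - \id_V$ is (up to sign) the projection onto $\ker(f)$; this has finite rank exactly when $\ker(f)$ is finite-dimensional, which handles the addendum in this case.

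For finite-dimensional cokernel, Corollary~\ref{C:open mapping finite cokernel} first guarantees that $f$ is strict, so $\image(f)$ is a closed subspace of $V'$ of finite codimension. Applying Lemma~\ref{L:split inclusion}(b) to $\image(f) \subseteq V'$ produces a (necessarily finite-dimensional) closed complement $W$ together with a projection $\pi\colon V' \to \image(f)$ along $W$. The corestriction $f_0\colon V \to \image(f)$ is surjective, so by the previous paragraph it admits a section $g_0$ with $f_0 \circ g_0 = \id_{\image(f)}$; I would then set $g = g_0 \circ \pi$. Since $f \circ g$ is the inclusion of $\image(f)$ composed with $\pi$, the difference $f \circ g - \id_{V'}$ vanishes on $\image(f)$ and equals $-\id_W$ on $W$, hence has rank $\dim W = \dim \coker(f) < \infty$. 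For the addendum, note $\pi \circ f = f_0$, so $g \circ f = g_0 \circ f_0$, and $\ker(f_0) = \ker(f)$; thus when $\ker(f)$ is finite-dimensional the surjective case gives that $g \circ f - \id_V = g_0 \circ f_0 - \id_V$ has finite rank.

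I expect the only real content to be the availability of closed complements, i.e. the splitting furnished by Lemma~\ref{L:split inclusion}; everything else is bookkeeping with the direct sum decompositions. The one point demanding a little care is the claim that $\image(f)$ is closed in the finite-cokernel case, which is exactly what strictness (via Corollary~\ref{C:open mapping finite cokernel}) provides, and without which the complement $W$ and the projection $\pi$ would not be defined.
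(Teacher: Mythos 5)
Your proof is correct, and in the surjective case it is the same as the paper's: both split $\ker(f)$ off $V$ via Lemma~\ref{L:split inclusion} and take $g$ to be the inverse of the induced isomorphism from the complement. Where you genuinely diverge is the reduction of the finite-codimension case to the surjective one. You shrink the codomain: you invoke Corollary~\ref{C:open mapping finite cokernel} to see that $\image(f)$ is closed, split $\image(f)$ off $V'$ by Lemma~\ref{L:split inclusion}, and compose the section of the corestriction $f_0\colon V \to \image(f)$ with the projection $\pi$. The paper instead enlarges the domain: it chooses a linear splitting of $V' \to \coker(f)$ (automatic since $\coker(f)$ is finite-dimensional), applies the surjective case to the resulting surjection $f'\colon V \oplus \coker(f) \to V'$, and takes $g = \pi \circ g'$ with $\pi$ the projection to $V$; the error $f \circ g - \id_{V'} = (f \circ \pi - f')\circ g'$ then factors through $\coker(f)$. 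The paper's route needs no strictness input at all beyond what is inside Lemma~\ref{L:split inclusion}, whereas yours requires knowing $\image(f)$ is closed; in exchange your construction is more explicit, since $f \circ g - \id_{V'}$ and $g \circ f - \id_V$ come out as literal negatives of bounded projections onto $W \cong \coker(f)$ and $\ker(f)$, so all the finite-rank claims, including the addendum, are immediate. The paper instead must handle the addendum by a separate small argument: from $f \circ (g \circ f - \id_V) = (f \circ g - \id_{V'})\circ f$ the left-hand side has finite rank, and finite-dimensionality of $\ker(f)$ then forces $g \circ f - \id_V$ itself to have finite rank. Both arguments are sound; yours trades an extra analytic hypothesis check for sharper control of the error terms.
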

\begin{proof}
If $f$ has zero cokernel, then the claim follows by applying Lemma~\ref{L:split inclusion} to the subspace $\ker(f)$ of $V$.
Otherwise, the map $V' \to \coker(f)$ is split;
choosing a splitting, we obtain a surjection $f'\colon V \oplus \coker(f) \to V'$ to which we may apply the previous case
to obtain a map $g'\colon V' \to V \oplus \coker(f)$ such that $f' \circ g' = \id_{V'}$. Writing $\pi\colon V \oplus \coker f \to V$ for the canonical projection and taking $g = \pi \circ g'$, we then have $f \circ g - \id_{V'} = (f \circ \pi - f') \circ g'$, which factors through $\coker f$, so it has finite rank.

If $f$ also has finite-dimensional kernel, then for $g$ as above, 
\[
f \circ (g \circ f - \id_{V}) = (f \circ g - \id_{V'}) \circ f
\]
also has finite rank. We may then deduce the same for $g \circ f - \id_V$.
\end{proof}

\begin{cor} \label{C:split surjection1}
Let $f\colon V \to V'$ be a strict morphism of separable Banach spaces.
Then there exists a morphism $g\colon V'	\to V$ such that $f \circ g - \id_{V'}$ and $g \circ f - \id_V$ are both strict.
\end{cor}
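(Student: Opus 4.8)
The plan is to build $g$ as an explicit pseudo-inverse, by choosing topological direct sum decompositions of $V$ and $V'$ that put $f$ into block-diagonal form. First I would record that strictness forces $\image(f)$ to be closed: the kernel $N := \ker(f)$ is closed, so $V/N$ is a Banach space, and strictness says precisely that the continuous bijection $V/N \to \image(f)$ (the latter with its subspace topology) is a homeomorphism; hence $\image(f)$ is complete and therefore closed in $V'$. Write $I := \image(f)$. Since $V$ and $V'$ are separable, Lemma~\ref{L:split inclusion}(b) now supplies closed complements, i.e.\ closed subspaces $C \subseteq V$ and $Q \subseteq V'$ giving internal topological direct sums $V = N \oplus C$ and $V' = I \oplus Q$.

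Next I would observe that $f|_C \colon C \to I$ is a continuous bijection, being the composite of the isomorphism $C \cong V/N$ (from the decomposition $V = N \oplus C$) with the isomorphism $V/N \cong I$ induced by $f$; by Theorem~\ref{T:open mapping theorem} it is therefore an isomorphism of Banach spaces, with a bounded inverse $h \colon I \to C$. I would then define $g \colon V' \to V$ to act as $h$ on the summand $I$ and as $0$ on the summand $Q$; concretely $g = \iota_C \circ h \circ \pi_I$, where $\pi_I \colon V' \to I$ is the projection along $Q$ and $\iota_C \colon C \hookrightarrow V$ is the inclusion, so that $g$ is bounded. A direct computation in these coordinates shows that $f \circ g$ is the projection of $V'$ onto $I$ along $Q$ and that $g \circ f$ is the projection of $V$ onto $C$ along $N$, whence $f \circ g - \id_{V'} = -\pi_Q$ and $g \circ f - \id_V = -\pi_N$ are, up to sign, the complementary projections onto $Q$ and $N$.

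Finally I would check that a projection onto a closed, topologically complemented subspace is strict: its image is a topological direct summand, so the subspace topology on the image agrees with the quotient topology (equivalently, the inclusion of the summand furnishes a bounded section), and the claim follows. The only genuinely nonformal point is the very first one—deducing from strictness that $\image(f)$ is closed, so that Lemma~\ref{L:split inclusion} can be applied to split it off. After that, everything reduces to bookkeeping with the block decomposition, and crucially the construction produces honest projections (not merely finite-rank errors as in Corollary~\ref{C:split surjection}), so no finite-dimensionality hypothesis on $\ker(f)$ is needed.
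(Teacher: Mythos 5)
Your proposal is correct and is essentially the paper's own argument: both use Lemma~\ref{L:split inclusion} to split $\ker(f) \subseteq V$ and $\image(f) \subseteq V'$ (the latter made possible by strictness forcing the image to be closed), then define $g$ by projecting onto $\image(f)$ and transporting back through the isomorphism between $\image(f)$ and a closed complement of $\ker(f)$, so that $f \circ g$ and $g \circ f$ become projections and the error terms are (negatives of) complementary projections, which are strict. The only cosmetic difference is that you invoke the open mapping theorem where the paper reads the same isomorphism directly off the strictness hypothesis.
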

\begin{proof}
By Lemma~\ref{L:split inclusion}, the inclusions
\[
\ker(f) \subseteq V, \qquad \image(f) \subseteq V'
\]
of Banach spaces (the second one following from strictness) are both split.
Let $g_1\colon V' \to \image(f)$ be a map splitting the second inclusion and $g_2\colon V \to \ker(f) \subseteq V$ be a map splitting the first inclusion.
The map $g_2$ is an idempotent with image $\ker(f)$; its kernel $W$ is a complement of $\ker(f)$ in $V$,
so $W$ is naturally isomorphic to $V/\ker(f) \cong \image(f)$. We now interpret $g_1$ as a map $V' \to W \subset V$
and then take $g = (\id_V - g_2) \circ g_1$.
The map $f \circ g - \id_{V'}$ is a strict surjection onto $\image(\id_{V'} - g_1)$.
The map $g \circ f - \id_V$ is a strict surjection onto $\ker(f)$.
\end{proof}

\section{Analytic group actions}

Throughout this section, let $\Gamma$ be a compact $p$-adic Lie group.

\begin{defn} \label{D:coordinate chart}
By a \emph{coordinate chart} for $\Gamma$,
we will mean a homeomorphism $\pi\colon \ZZ_p^n \cong U$ of topological spaces for some neighborhood $U$ of the identity in $\Gamma$, where $n$ is the rank of $\Gamma$; we further require that $\pi(0)$ is the identity in $\Gamma$.
Given such a chart, let $\Gamma_j$ denote the subset of $\Gamma$ corresponding to $p^j \ZZ_p^n \subseteq \ZZ_p^n$.

In practice, we will often assume that $\pi$ has been chosen so that the subsets $\Gamma_j$ are all subgroups. This will be harmless on account of Lemma~\ref{L:commutators} and Remark~\ref{R:value of j} below.
\end{defn}

\begin{lem} \label{L:commutators}
Fix a coordinate chart $\pi\colon \ZZ_p^n \cong U$ for $\Gamma$.
Then for $j \gg 0$, the following statements hold.
\begin{enumerate}
    \item[(a)] The group $\Gamma_j$ is a subgroup of $\Gamma$.
    \item[(b)] The group $\Gamma_j/\Gamma_{j+1}$ is isomorphic to $(\ZZ/p\ZZ)^n$.
    \item[(c)] For $j_1, j_2 \geq j$,
    \[
    [\Gamma_{j_1}, \Gamma_{j_2}] \subseteq \Gamma_{j_1+j_2-j}.
    \]
\end{enumerate}
\end{lem}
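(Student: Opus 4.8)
The plan is to trade the abstract chart $\pi$ for the exponential coordinates attached to the Lie algebra $\frakg$ of $\Gamma$ and then read off (a)--(c) from the Baker--Campbell--Hausdorff (BCH) formula. Writing $\exp$ for the locally defined exponential from a neighbourhood of $0$ in $\frakg$ to $\Gamma$, the composite $\phi = \exp^{-1}\circ\,\pi$ is (using that $\pi$ is compatible with the analytic structure) an analytic map from a neighbourhood of $0$ in $\ZZ_p^n$ into $\frakg$ with $\phi(0)=0$ and invertible linear part $B=(D\phi)(0)$. The key preliminary point is that for $j\gg 0$ the higher-order terms of $\phi$ are negligible against the linear term, so that $\phi(p^j\ZZ_p^n)=p^j M_0$ where $M_0=B\,\ZZ_p^n$ is a fixed $\ZZ_p$-lattice in $\frakg$; consequently $\Gamma_j=\exp(p^j M_0)$ for $j\gg 0$, and the whole problem reduces to understanding the filtration of $\Gamma$ by the images under $\exp$ of the lattices $p^j M_0$. (Alternatively one may invoke the structure theory of compact $p$-adic Lie groups to replace the $\Gamma_j$ by the lower $p$-series of an open uniform subgroup, where (a)--(c) are standard; but the BCH computation is short enough to carry out directly.)

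Fix a constant $c\ge 0$ with $[M_0,M_0]\subseteq p^{-c}M_0$, which exists because the bracket is a continuous bilinear map and $M_0$ is a lattice. For $j$ at least $c$ and large enough that the BCH series converges on $p^j M_0$, the identity $\log(\exp x\cdot\exp y)=x+y+\tfrac12[x,y]+\cdots$ shows that $x,y\in p^j M_0$ forces the right-hand side into $p^j M_0$, since every term beyond $x+y$ lies in $p^{2j}[M_0,M_0]\subseteq p^{2j-c}M_0\subseteq p^j M_0$. Hence $\exp(p^j M_0)=\Gamma_j$ is a subgroup, giving (a). For (b), those same correction terms lie in $p^{2j-c}M_0\subseteq p^{j+1}M_0$ once $j\ge c+1$, so $\exp$ descends to a group isomorphism $p^j M_0/p^{j+1}M_0 \xrightarrow{\sim} \Gamma_j/\Gamma_{j+1}$; the source is $M_0/pM_0\cong(\ZZ/p\ZZ)^n$.

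For (c) I would apply BCH to the group commutator, which has the form $\exp([x,y]+(\text{higher brackets}))$ for $x=\log g$, $y=\log h$. If $g\in\Gamma_{j_1}$ and $h\in\Gamma_{j_2}$, then $x\in p^{j_1}M_0$ and $y\in p^{j_2}M_0$, so $[x,y]\in p^{j_1+j_2}[M_0,M_0]\subseteq p^{j_1+j_2-c}M_0$ and the higher terms lie even deeper; thus each such commutator lies in $\Gamma_{j_1+j_2-c}=\exp(p^{j_1+j_2-c}M_0)$, which is a subgroup by (a). The subgroup $[\Gamma_{j_1},\Gamma_{j_2}]$ that these commutators generate is therefore contained in $\Gamma_{j_1+j_2-c}$. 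Finally, taking the threshold $j\ge c$ (and large enough for the previous steps) gives $\Gamma_{j_1+j_2-c}\subseteq\Gamma_{j_1+j_2-j}$, which is exactly the asserted bound. I expect the main obstacle to be the preliminary reduction rather than the BCH bookkeeping: one must justify that passing through the chart $\pi$ distorts the lattice filtration only by a bounded amount — precisely the constant $c$ recording the failure of $M_0$ to be a Lie lattice — and it is here that the analytic (not merely topological) nature of the chart, together with the slack ``$-j$'' built into the statement, is essential.
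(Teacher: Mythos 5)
For parts (a) and (b) your argument coincides with the paper's: the paper also passes to exponential coordinates on a lattice $T$ in the Lie algebra and deduces (a) and (b) from the fact that the multiplication law, written as convergent power series $f_i$, satisfies $f_i(x,y) \equiv x_i + y_i$ modulo quadratic terms, which is exactly your estimate on the degree-two-and-higher part of BCH. Where you genuinely diverge is (c). The paper never estimates the commutator series at all: it proves (c) by induction on $(j_1-j)+(j_2-j)$ using only (a), (b) and group theory --- each $\gamma_1 \in \Gamma_{j_1}$ is written as $\eta_1^p$ with $\eta_1 \in \Gamma_{j_1-1}$, the conjugate $\gamma_1^{-1}\gamma_2\gamma_1$ is expanded as $\gamma_2$ times a product of $p$ conjugates of a single element $\xi \in \Gamma_{j_1+j_2-j-1}$, and these conjugates all represent the same class in the elementary abelian quotient $\Gamma_{j_1+j_2-j-1}/\Gamma_{j_1+j_2-j}$, so their product dies there. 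Your route applies BCH directly to the group commutator, $\log(ghg^{-1}h^{-1}) = [x,y] + \cdots$, and buys a strictly stronger, uniform conclusion: $[\Gamma_{j_1},\Gamma_{j_2}] \subseteq \Gamma_{j_1+j_2-c}$ with $c$ a fixed constant independent of $j$. The price is that you must genuinely control the series: your claim that every term beyond $x+y$ lies in $p^{2j}[M_0,M_0]$, and that the higher commutator terms ``lie even deeper,'' ignores the $p$-adic denominators of the BCH coefficients ($\tfrac12$, $\tfrac1{12}$, \dots), whose valuations grow with the degree. The fix is routine --- enlarge $c$ by a constant depending only on $p$ and raise the threshold on $j$, which the slack ``$j \gg 0$'' absorbs --- but it must be said, since $p$-adic convergence of BCH is precisely the delicate point; the paper's induction sidesteps every such estimate.

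One caveat cuts against you and the paper equally. The lemma grants only a homeomorphism $\pi$ of topological spaces, whereas your reduction (via $\phi = \exp^{-1}\circ\pi$ being analytic with invertible linear part) and the paper's silent replacement of $\pi$ by exponential coordinates both require the chart to be compatible with the analytic structure --- a notion the paper only defines \emph{after} this lemma. For a merely topological chart the statement is in fact false: on $\Gamma = \ZZ_p$ one can build a homeomorphism fixing $0$ that swaps, for each even $j$, a ball inside $p^j\ZZ_p$ with a ball of the same radius just outside it, so that $\Gamma_j$ fails to be a subgroup for infinitely many $j$. You were right to single out this reduction as the real obstacle, and you are more explicit about it than the paper is; but since neither argument closes it, this is a defect of the lemma's statement rather than a gap in your proof relative to the paper's.
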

\begin{proof}
Let $L$ be the Lie algebra (over $\QQ_p$) associated to $\Gamma$.
Then $L$ is also the Lie algebra associated to an algebraic group $\Gamma_{\QQ_p}$ over $\QQ_p$ in which $\Gamma$ occurs as a compact open subgroup (of the group of $\QQ_p$-points). The exponential map then defines
a homeomorphism between some lattice $T$ in $L$ and some compact neighborhood of the identity in $\Gamma$.

Via the exponential map, the group operation corresponds to some locally analytic map $T \times T \to L$.
By shrinking $T$, we can ensure that this is in fact an analytic map. That is, if we fix a basis $e_1,\dots,e_n$ of $T$, then the group operation is expressed by some functions
\[
f_1, \dots, f_n\colon \ZZ_p^{2n} \to \QQ_p
\]
which can be represented as convergent power series. Thanks to the behavior of the identity element (which corresponds to $0 \in T$), we see that
\begin{equation} \label{eq:formula for Lie addition}
f_i(x_1,\dots,x_n,y_1,\dots,y_n) \equiv x_i + y_i \pmod{(x_1,\dots,x_n,y_1,\dots,y_n)^2}.
\end{equation}
In particular, for any fixed $c>0$, for $j \gg 0$ 
we have
\[
f_i(x_1,\dots,x_n,y_1,\dots,y_n) \equiv x_i + y_i \pmod{p^{j+c}} \quad (x_1,\dots,x_n,y_1,\dots,y_n \in p^j \ZZ_p).
\]
From this we may deduce (a) and (b).

To deduce (c), we induct on $(j_1-j) + (j_2-j)$ (uniformly over choices of $j$); 
when this sum is zero we reduce immediately to (a).
To treat the induction step, suppose without loss of generality that $j_1 > j$. By (b), each element $\gamma_1 \in \Gamma_{j_1}$ is the $p$-th power of some element $\eta_1$ of $\Gamma_{j_1-1}$. For $\gamma_2 \in \Gamma_{j_2}$, by the induction hypothesis we have
\[
\eta_1^{-1} \gamma_2 \eta_1 = \gamma_2 \xi \mbox{ for some } \xi \in \Gamma_{j_1+j_2-j-1}.
\]
Consequently,
\begin{align*}
    \gamma_1^{-1} \gamma_2 \gamma_1 &= \eta_1^{-p} \gamma_2 \eta_1^p \\
    &= \gamma_2 \xi (\eta_1^{-1} \xi \eta_1) (\eta_1^{-2} \xi \eta_1^2) \cdots (\eta_1^{1-p} \xi \eta_1^{p-1}).
\end{align*}
By (b), the group $\Gamma_{j_1+j_2-j-1}/\Gamma_{j_1+j_2-j}$ is an elementary abelian $p$-group.
By the induction hypothesis (applied with $j = j_1$), 
the elements $\eta_1^{-i} \xi \eta_1^i$ for $i=0,\dots,p-1$ all represent the \emph{same} 
class in $\Gamma_{j_1+j_2-j-1}/\Gamma_{j_1+j_2-j}$, so their product represents the zero class.
\end{proof}

\begin{rmk} \label{R:value of j}
In Lemma~\ref{L:commutators}, replacing $U$ with the image of $p\ZZ_p^n$ gives a new chart for which the conclusion of (c) holds with $j$ replaced by $j-1$. Consequently, we can always choose a chart for which the conclusion of Lemma~\ref{L:commutators} holds with $j=0$.
\end{rmk}

\begin{defn}
Note that in the proof of Lemma~\ref{L:commutators}, the exponential map defines a map $\pi'\colon T \to \Gamma$ which itself becomes a coordinate chart upon fixing a group homeomorphism $\ZZ_p^n \cong T$. We say that the original coordinate chart $\pi$ is \emph{analytic} if for some (hence any sufficiently large) $m$,
the map $\pi^{-1} \circ \pi'\colon p^m \ZZ_p^n \to \ZZ_p^n$ is locally analytic.
\end{defn}

\begin{defn} \label{D:analytic}
Let $A$ be the completion of the group ring $\ZZ_p[\Gamma]$ with respect to the $p$-augmentation ideal
$\ker(\ZZ_p[\Gamma] \to \FF_p)$.
An \emph{analytic $\Gamma$-module} is a left $A$-module $M$ complete with respect to a valuation
$w(M; \bullet)$ for which there exist $a>0, c \in \RR$ such that
\begin{equation} \label{eq:analytic action}
w(M; xy) \geq aw(A; x) + w(M; y) + c \qquad (x \in A, y \in M).
\end{equation}
This implies that $M$ is a topological $A$-module for the topology defined by $w$.
If $M$ is $p$-torsion-free, it also implies that for any $x \in M$ the action map $\Gamma \to M$ taking $\gamma$ to $\gamma(x)$ is \emph{analytic} in the sense of being locally represented by a convergent power series on an analytic coordinate chart (see \cite[(V.2.3.6.2)]{Lazard}).
\end{defn}

In what follows, we will want to take $M$ not to be $p$-torsion-free, but rather a Banach module over a nonarchimedean field $K$ of characteristic $p$ (on which $\Gamma$ may or may not act).
In this case, Lazard's notion of analyticity must be replaced with the following.
\begin{defn} \label{D:c-analytic}
Let $M$ be a Banach module over a nonarchimedean field $K$ of characteristic $p$.
For $c > 1$, a function $f\colon \ZZ_p^n \to M$ is \emph{$c$-analytic} if there exists $d>0$ such that
\begin{equation} \label{eq:analyticity condition}
\left|f(\gamma_1,\dots,\gamma_n) - f(\gamma_1 + p^i\eta_1, \dots, \gamma_n + p^i\eta_n) \right|
\leq dc^{-p^i}
\end{equation}
for all $i \geq 0$ and all $\gamma_1,\dots,\gamma_n,\eta_1,\dots,\eta_n \in \ZZ_p$.
The $c$-analytic functions themselves form a Banach space for the norm given by the maximum of the supremum norm and the minimum value of $d$ for which \eqref{eq:analyticity condition} holds.
(This combination of metrics is loosely analogous to the definition of a Schwartz space in classical analysis. Compare the definition of a \emph{super-H\"older function} in \cite{BergerRosensztajn}.)

For example, if $M$ carries an analytic $\Gamma$-action and $\ZZ^n_p \cong U$ is an analytic coordinate chart for $\Gamma$, then for each $m \in M$ the action map $\ZZ^n_p \to U \to \Gamma \to M$
is $c$-analytic for some $c$ (depending on the value of $a$ in \eqref{eq:analytic action}, but not on $m$).
More precisely, there exist $c_0> 1$ and $e>0$ such that
\begin{equation} \label{eq:uniform analytic action}
\left\| \gamma - 1 \right\|_M \leq ec_0^{-p^i} \qquad (\gamma \in \Gamma_i).
\end{equation}
In case the action of $\Gamma$ is $K$-linear, we can restate this as follows:
a continuous $\Gamma$-action on $M$ is analytic if and only if for some $c>1$, the map
\[
\ZZ_p^n \to U \to \Gamma \to \Hom_K(M, M)
\]
is $c$-analytic, where $\Hom_K(M,M)$ denotes the space of bounded $K$-linear endomorphisms of $M$ equipped with the operator norm.
\end{defn}

\begin{defn}
For $M$ a linearly topologized complete $\Gamma$-module, let $C^\bullet_{\cont}(\Gamma, M)$ be the complex of continuous inhomogeneous cochains of $\Gamma$ valued in $M$. In particular, $C_{\cont}^i(\Gamma,M) = \Cont(\Gamma^i, M)$. The cohomology of this complex computes the continuous $\Gamma$-cohomology of $M$, or equivalently the continuous $A$-cohomology of $M$ \cite[(V.1.2.6)]{Lazard}.

Now fix an analytic coordinate chart $\pi:\ZZ_p^n \to U$ on $\Gamma$
and assume that $M$ is a Banach module over a nonarchimedean field $K$ of characteristic $p$
and the action of $\Gamma$ on $M$ is analytic.
For $c > 1$ and $i \geq 0$, an $i$-cochain $f\colon \Gamma^i \to M$ is \emph{$c$-analytic} if for 
any $(\gamma_1,\dots,\gamma_i) \in \Gamma^i$, the function
\[
(\ZZ_p^n)^i \to M, \qquad (\eta_1,\dots,\eta_i) \mapsto f(\gamma_1 \pi(\eta_1),\dots,\gamma_i \pi(\eta_i))
\]
is $c$-analytic.

Let $C^i_{\an,c}(\Gamma, M) \subseteq C^i_{\cont}(\Gamma, M)$ be the set of $c$-analytic
$i$-cochains. 
For $c_0$ as in Definition~\ref{D:c-analytic},
for $c \in (1, c_0]$ we obtain a subcomplex $C^\bullet_{\an,c}(\Gamma,M)$ of $C^\bullet_{\cont}(\Gamma,M)$.
\end{defn}

\begin{rmk}
As motivation for what follows, we explicitly compare the first cohomology groups of $C^{\bullet}_{\an,c}(\Gamma, M)$ and $C^{\bullet}_{\cont}(\Gamma,M)$. The latter computes the first continuous cohomology group of $\Gamma$ valued in $M$, which as usual is isomorphic to the group of continuous crossed homomorphisms modulo the principal crossed homomorphisms. Now note that every crossed homomorphism is automatically $c_0$-analytic (because it is uniquely determined by its values on a set of topological generators), so using $c$-analytic cochains for any $c \in (1,c_0]$ yields the same answer as using continuous cochains.
\end{rmk}

\begin{thm}[after Lazard] \label{T:Lazard char p}
Fix an analytic coordinate chart $\pi$ on $\Gamma$.
Let $M$ be a Banach module over a nonarchimedean field $K$ of characteristic $p$
equipped with an analytic $\Gamma$-action.
Define $c_0 > 0$ as in Definition~\ref{D:c-analytic}.
Then for any $c \in (1, c_0]$, there exists
a continuous map $s\colon C_{\cont}^\bullet(\Gamma,M) \to C_{\an,c}^{\bullet}(\Gamma,M)$
whose composition with the inclusion $\iota\colon C_{\an,c}^\bullet(\Gamma,M) 
\to C_{\cont}^\bullet(\Gamma,M)$ is homotopic to the identity
on $C_{\cont}^{\bullet}(\Gamma,M)$.
Consequently, $\iota$ induces surjective maps on cohomology groups.
\end{thm}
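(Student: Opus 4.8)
The plan is to construct $s$ hand in hand with an explicit cochain homotopy $h$ on $C^\bullet_{\cont}(\Gamma,M)$ satisfying $\iota \circ s - \id = dh + hd$; surjectivity of $\iota$ on cohomology is then automatic, since homotopic maps agree on cohomology and so $\iota_* \circ s_* = \id$ on each $H^i(C^\bullet_{\cont}(\Gamma,M))$. By Lemma~\ref{L:commutators} together with Remark~\ref{R:value of j}, I would first normalize $\pi$ so that every $\Gamma_j$ is a subgroup with the commutator filtration behaving well from $j=0$ on; this lets one analyze cochains according to their scale of variation along $\{\Gamma_j\}$.

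The operator $s$ should be a smoothing operator built from this filtration. For each level $j$ one has the truncation $\sigma_j$ sending a continuous cochain $f$ to the cochain obtained by evaluating $f$ at fixed coset representatives for $\Gamma_j$ in each variable. Each $\sigma_j f$ is locally constant, and a direct check against \eqref{eq:analyticity condition} shows that locally constant cochains lie in $C^\bullet_{\an,c}(\Gamma,M)$, since their finite differences vanish for increments in $p^j\ZZ_p$; however, the $c$-analytic norm of $\sigma_j f$ grows like $c^{p^{j-1}}$. Since the plain telescoping sum $\sum_j (\sigma_{j+1} - \sigma_j) f$ recovers $f$ only in the supremum norm and not in the $c$-analytic norm, the corrections must be modified so as to converge analytically, and this is where the analyticity of the action enters decisively. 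The point is that the relevant corrections are not bare differences of $f$ but differences twisted by the group action, so that by \eqref{eq:uniform analytic action} the operators $\gamma - 1$ for $\gamma \in \Gamma_i$ contribute a factor $\le e c_0^{-p^i}$. This super-exponential decay is exactly calibrated to offset the $c^{p^{j-1}}$-type growth of the truncation norms and to force convergence of $s f$ in the $c$-analytic norm for every $c \in (1, c_0]$.

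The homotopy $h$ is then extracted from the same data. Passage between adjacent filtration levels is governed by the standard homotopy formulas for inhomogeneous group cochains, and the difference $\iota \circ s - \id$ decomposes as a sum of terms each null-homotopic in $C^\bullet_{\cont}(\Gamma,M)$; one checks, using the same decay estimates, that these partial homotopies assemble into a single bounded operator $h$. Boundedness of $s$ and of $h$, hence continuity, is read off from the uniformity of the constants $e$ and $c_0$ in \eqref{eq:uniform analytic action}.

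I expect the decisive difficulty to be the analytic bookkeeping forced by characteristic $p$. In the classical characteristic $0$ setting one would smooth by integrating against Haar measure on $\Gamma_j$, but here $|\Gamma_j|$ is a power of $p$ and division is unavailable, so averaging must be replaced by the telescoping-with-decay construction above. The delicate point, and the place where the hypothesis that the action is \emph{analytic} rather than merely continuous is indispensable (the statement being false for general continuous actions on Banach spaces), is to verify that the decay supplied by \eqref{eq:uniform analytic action} genuinely dominates, uniformly for all $c \le c_0$, the growth of the truncations as $j \to \infty$, so that $s$ lands in $C^\bullet_{\an,c}(\Gamma,M)$ and the homotopy identity holds with bounded operators.
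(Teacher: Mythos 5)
Your overall frame is correct --- producing $s$ together with a homotopy $h$ with $\iota\circ s-\id=dh+hd$ is exactly what is needed, and the surjectivity conclusion follows formally --- but the construction of $s$ has a gap at its central analytic claim, and I do not believe it can be repaired in the form you describe. The issue is your assertion that the telescoping corrections, once ``twisted by the group action,'' converge in the $c$-analytic norm because of \eqref{eq:uniform analytic action}. That estimate controls only the operator norms $\left\|\gamma-1\right\|_M$ for $\gamma\in\Gamma_i$, i.e.\ the variation of the \emph{module action}; it says nothing about the variation of an arbitrary continuous cochain $f$, which is independent data with an arbitrary modulus of continuity. Concretely, by your own accounting the locally constant piece $(\sigma_{j+1}-\sigma_j)f$ has $c$-analytic norm comparable to $\left\|(\sigma_{j+1}-\sigma_j)f\right\|_{\sup}\cdot c^{p^{j}}$, so convergence of the telescoping sum in the $c$-analytic norm requires $\left\|(\sigma_{j+1}-\sigma_j)f\right\|_{\sup}c^{p^j}\to 0$; comparing with \eqref{eq:analyticity condition}, this is essentially the statement that $f$ was $c$-analytic to begin with. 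No twisting by operators of norm $\leq e c_0^{-p^i}$ can manufacture this decay, because the obstruction lives in $f$, not in the action. (A secondary problem: $\sigma_j$, evaluation at coset representatives, is not a chain map --- the inhomogeneous differential mixes cosets through multiplication and the action --- so there is no ``standard homotopy formula'' making $\sigma_j$ homotopic to the identity to assemble; and even granting such partial homotopies, summing infinitely many of them needs the same nonexistent decay.)

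What this reveals is that $s(f)$ cannot be any kind of norm-limit smoothing of $f$: it must differ from $f$ by an amount that is controlled only up to homotopy, not in norm. The paper achieves this by a homological rather than analytic mechanism, following Lazard: one compares the completed standard complex $X_\bullet$ (whose $\Hom_A(X_\bullet,M)$ computes continuous cochains) with the quasi-minimal complex $Y_\bullet$, a complex of \emph{finitely generated free} $A$-modules, via maps $\varphi_\bullet\colon X_\bullet\to Y_\bullet$ and $\psi_\bullet\colon Y_\bullet\to X_\bullet$ whose composites are homotopic to the identity. The map $s$ sends $f$ to the cochain associated to $(f\circ\psi)\circ\varphi$. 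The point is that any $g\in\Hom_A(Y_i,M)$ is determined by finitely many elements of $M$, so the cochain $g\circ\varphi$ varies in the group variables only through the $A$-action on $M$ --- and that variation is $c_0$-analytic by hypothesis. In other words, the wild continuous variation of $f$ is discarded (it survives only through finitely many $M$-values), and the homotopy $\iota\circ s\simeq\id$ comes for free from the resolution comparison rather than from estimates. This factorization through a finite free resolution is the key idea your construction is missing, and it is exactly what circumvents the convergence problem that defeats the direct smoothing approach.
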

\begin{proof}
We follow the proof of \cite[Th\'eor\`eme V.2.3.10]{Lazard}, with some minor changes needed to accommodate the fact that we are not in a $p$-torsion-free setting.
Let $\gamma_1,\dots,\gamma_n \in U$ be the images under $\pi$ of the standard basis vectors of $\ZZ_p^n$. 
As in \cite[(V.2.3.5)]{Lazard},
let $X_\bullet$ be the \emph{completed standard complex} of $\Gamma$
and let $Y_\bullet$ be the \emph{quasi-minimal complex} of $\Gamma$;
these are both acyclic resolutions of $\ZZ_p$ in the category of $A$-modules (without topology).
By \cite[(V.1.1.5)]{Lazard}, there exist morphisms of filtered $A$-complexes
\[
\varphi_\bullet\colon X_\bullet \to Y_\bullet, \qquad \psi_\bullet\colon Y_\bullet \to X_\bullet
\]
whose compositions in either direction are homotopic to the identity.
In particular, there exist a family of morphisms $h_n\colon X_n \to X_{n+1}$ of filtered $A$-modules such that
\[
\psi_n \circ \varphi_n - 1 = d_{n+1} \circ h_n + h_{n-1} \circ d_n.
\]

Note that so far, none of this has anything to do with $M$.
We now recall that the continuous $\Gamma$-cohomology of $M$ can be computed as the
continuous $A$-cohomology of $M$ \cite[(V.1.2.6)]{Lazard},
which in turn coincides with the \emph{discrete} $A$-cohomology of $M$
\cite[(V.2.2.3)]{Lazard}. The latter is computed by the complex $\Hom_A(X_\bullet, M)$,
or thanks to the previous paragraph by the complex $\Hom_A(Y_\bullet, M)$. Now note that for any $f \in \Hom_A(Y_i, M)$, the $i$-cochain associated to $f \circ \varphi_n$ is $c_0$-analytic
(compare \cite[(V.2.3.6.4)]{Lazard}. By tracing through the identifications, we obtain from $\varphi_n$ the desired map $s$.
\end{proof}

\begin{rmk}
With a bit more work, it should be possible to adapt the rest of the proof of 
\cite[Th\'eor\`eme V.2.3.10]{Lazard} to show that the inclusion $\iota$ is in fact a quasi-isomorphism. As we will not need this here, we omit the details.

In the proof of Theorem~\ref{T:main}, we will use one special case of this stronger statement that can be verified directly: if $\Gamma$ is pro-$p$ and procyclic (that is, $\Gamma \cong \ZZ_p$) and $\gamma \in \Gamma$ is a topological generator, then $C^{\bullet}_{\cont}(\gamma, M)$  is quasi-isomorphic to the complex
$0 \to M \stackrel{\gamma-1}{\to} M \to 0$ with the nonzero terms placed in degrees $0$ and $1$.
(The cohomology of the latter defines a $\delta$-functor by the snake lemma, which is effaceable because
we can always embed $M$ into the module $\Cont(\Gamma, M)$ on which $\gamma-1$ is surjective; hence \cite[Tag 010T]{StacksProject} applies.)
\end{rmk}

\section{The main theorem}

In this section, we prove the following theorem. Part (a) reproduces \cite[Theorem~4.1]{KedlayaHS} but with some corrections made to the proof.
\begin{thm} \label{T:main}
Let $K$ be a nonarchimedean field of characteristic $p$.
Let $\Gamma$ be a compact $p$-adic Lie group.
Let $H$ be a closed, torsion-free, pro-$p$, procyclic subgroup of $\Gamma$.
Let $M$ be a Banach space over $K$ equipped with an action of $\Gamma$ which is continuous and analytic.
\begin{enumerate}
    \item[(a)] Suppose that $H^i_{\cont}(H, M) = 0$ for all $i \geq 0$.
    Then $H^i_{\cont}(\Gamma, M) = 0$ for all $i \geq 0$.
    \item[(b)] Suppose that $\Gamma$ acts $K$-linearly and that 
    $\dim_K H^i_{\cont}(H, M) < \infty$ for all $i \geq 0$.
    Then $\dim_K H^i_{\cont}(\Gamma, M) < \infty$ for all $i \geq 0$.
\end{enumerate}
\end{thm}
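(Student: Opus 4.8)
The plan is to replace continuous cohomology by analytic cochains, to translate the hypothesis on $H$ into a statement about a single operator, and then to build on the analytic complex either a contracting homotopy (for (a)) or a homotopy to a compact endomorphism (for (b)), in the spirit of the Cartan--Serre--Kiehl finiteness method. First I would reduce both parts to a statement about $C^\bullet_{\an,c}(\Gamma,M)$. By Theorem~\ref{T:Lazard char p}, for $c \in (1,c_0]$ the inclusion $\iota$ induces a surjection $H^i(C^\bullet_{\an,c}(\Gamma,M)) \to H^i_{\cont}(\Gamma,M)$, so it suffices to prove that $H^i(C^\bullet_{\an,c}(\Gamma,M))$ vanishes (for (a)) or is finite-dimensional (for (b)): in (a) vanishing propagates through the surjection, and in (b) a quotient of a finite-dimensional space is finite-dimensional. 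Along the way I would pass to the separable case, using Remark~\ref{R:separable union} and Lemma~\ref{L:cohomology after base change} to descend to a countably generated situation, so that the splitting results of Corollary~\ref{C:split surjection} and Corollary~\ref{C:split surjection1} become available.

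Next I would reduce the hypothesis on $H$ to the behaviour of a single operator. Fixing a topological generator $h$ of $H \cong \ZZ_p$, the remark following Theorem~\ref{T:Lazard char p} identifies $H^\bullet_{\cont}(H,M)$ with the cohomology of $0 \to M \xrightarrow{h-1} M \to 0$, so that $H^0_{\cont}(H,M) = \ker(h-1)$ and $H^1_{\cont}(H,M) = \coker(h-1)$, with higher groups vanishing. Thus the hypothesis of (a) says precisely that $h-1$ is bijective, hence (by Theorem~\ref{T:open mapping theorem}) a topological isomorphism, while the hypothesis of (b) says precisely that $h-1$ is Fredholm, its kernel and cokernel being finite-dimensional and (by Corollary~\ref{C:finite-dimensional cohomology means strict}) itself strict. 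In the latter case Corollary~\ref{C:split surjection} furnishes an operator $g$ on $M$ inverting $h-1$ up to finite rank, i.e.\ with $(h-1)g - \id_M$ and $g(h-1) - \id_M$ of finite rank; this $g$ will serve as a surrogate for $(h-1)^{-1}$.

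The heart of the argument is to promote the (approximate) inverse of $h-1$ to a homotopy on $C^\bullet_{\an,c}(\Gamma,M)$. I would choose the analytic chart adapted to $H$, so that the first coordinate direction spans $\mathrm{Lie}(H)$ and $h$ is a $p$-power root of $\pi(1,0,\dots,0)$, which isolates the ``$H$-direction'' in each cochain variable. If $\Gamma$ were the product of $H$ with a complementary subgroup, the complex would be the total complex of the two-term $H$-complex applied coefficientwise, and $(h-1)^{-1}$ (resp.\ $g$) would give a genuine contracting homotopy (resp.\ a homotopy to a finite-rank, hence compact, endomorphism). For general $\Gamma$ the subgroup $H$ need not be normal, and the discrepancy between the true differential and this product model is governed by commutators: by Lemma~\ref{L:commutators} (normalised as in Remark~\ref{R:value of j}) these lie deep in the filtration, and by the analytic decay \eqref{eq:uniform analytic action} the operators $\gamma-1$ for $\gamma \in \Gamma_i$ are correspondingly small. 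I would use these inputs to show the error terms are contracting, so that in case (a) the formal homotopy is corrected by a convergent Neumann series into an honest contracting homotopy, and in case (b) the identity is exhibited as chain-homotopic to an endomorphism factoring through a compact inclusion $C^\bullet_{\an,c}(\Gamma,M) \hookrightarrow C^\bullet_{\an,c'}(\Gamma,M)$ (with $1 < c' < c$), which is compact because the stronger analyticity norm embeds compactly into the weaker one. A final appeal to the abstract Cartan--Serre principle—that a complex of Banach spaces whose identity is chain-homotopic to a degreewise compact endomorphism has finite-dimensional cohomology—then gives (b), while the contracting homotopy gives (a).

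The main obstacle I anticipate is exactly this analytic bookkeeping: quantifying the error terms coming from the non-normality of $H$ and from replacing $(h-1)^{-1}$ by $g$, and showing that, measured in the norms of \eqref{eq:analyticity condition}, they gain enough decay to be contracting (for the Neumann series in (a)) or compact (for the finiteness in (b)). This is precisely where the commutator estimates of Lemma~\ref{L:commutators} must be matched against the super-H\"older decay of the analytic action, and it is presumably the point at which the corrections to the proof of \cite[Theorem~4.1]{KedlayaHS} enter.
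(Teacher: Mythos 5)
Your outline shares the paper's skeleton (reduce to $c$-analytic cochains via Theorem~\ref{T:Lazard char p}, identify $H^\bullet_{\cont}(H,M)$ with the two-term complex $M \stackrel{\eta-1}{\to} M$, obtain $g$ from Corollary~\ref{C:split surjection}, and correct a formal homotopy whose errors are governed by commutators and analytic decay), but your two key mechanisms fail as stated. First, the finiteness device you propose for (b) is unavailable: the inclusion $C^\bullet_{\an,c}(\Gamma,M) \hookrightarrow C^\bullet_{\an,c'}(\Gamma,M)$ with $1 < c' < c$ is \emph{not} compact when $M$ is infinite-dimensional. In degree $0$ it is literally $\id_M$, and in every degree $n$ the constant cochains embed $M$ isometrically into $C^n_{\an,c}(\Gamma,M)$ while evaluation at a point gives a bounded retraction $C^n_{\an,c'}(\Gamma,M) \to M$, so $\id_M$ factors through the inclusion; if the inclusion were completely continuous (a limit of finite-rank operators), then so would be $\id_M$, and any finite-rank $F$ with $\left\| \id_M - F \right\| < 1$ would be invertible, forcing $\dim_K M < \infty$. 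The Kiehl-type compactness you invoke holds for scalar (or finite-dimensional) coefficients, not for coefficients in an infinite-dimensional Banach space. The paper's substitute is to keep the homotopy error in the form (operator of norm $< 1$) plus (finite rank), the finite-rank part coming from $g(\eta-1)-\id_M$ and $(\eta-1)g - \id_M$; then $1+\varphi$ is invertible, hence induces an isomorphism on cohomology, while being homotopic to a finite-rank map $\epsilon$, which directly forces each cohomology group to be finite-dimensional (zero in case (a)). No compactness of any inclusion enters.

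Second, and this affects (a) as well as (b): the non-normality errors are \emph{not} small if you run the homotopy with the generator $h=\eta$ itself, as your product-model picture suggests. With the normalization of Remark~\ref{R:value of j}, Lemma~\ref{L:commutators}(c) gives $[\Gamma_k,\Gamma_j] \subseteq \Gamma_{k+j}$, so for a cochain argument $\gamma \in \Gamma = \Gamma_0$ the commutator $[\gamma,\eta]$ lies only in $\Gamma_0$ --- no gain at all, and no Neumann series will converge. The paper's central device, absent from your plan, is to use for $i \gg 0$ the element $\eta^{p^i} \in \Gamma_i$ together with the operator $g^{p^i}$ in the homotopy $h_i(f_n)(\gamma_1,\dots,\gamma_{n-1}) = g^{p^i}\sum_{j}(-1)^{j-1} f_n(\gamma_1,\dots,\gamma_{j-1},\eta^{p^i},\gamma_j,\dots,\gamma_{n-1})$. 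Then $\gamma^{-1}\eta^{-p^i}\gamma\eta^{p^i} \in \Gamma_{i+k}$ for $\gamma \in \Gamma_k$, and \eqref{eq:uniform analytic action} together with \eqref{eq:analytic cochain formula} bounds the error by roughly $\max\{1,\left\| g \right\|_M\}^{2p^i} c_0^{-p^{i+l}}$, which tends to $0$ precisely because $l$ is chosen as in \eqref{eq:set up bound} after first passing (via Hochschild--Serre) to an open subgroup $\Gamma_l$ on which these normalizations hold. Crucially, $g^{p^i}$ is still an inverse (resp.\ approximate inverse) of $\eta^{p^i}-1$ only because $\eta^{p^i}-1 = (\eta-1)^{p^i}$ in characteristic $p$; this identity is also what permits replacing $H$ by an open subgroup at the outset, and it is exactly what fails in mixed characteristic (whence Conjecture~\ref{conj:char 0 analytic conjecture} remains a conjecture). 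Without the $p$-power trick, the ``analytic bookkeeping'' you defer is not bookkeeping but the missing proof.
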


\begin{proof}[Proof of Theorem~\ref{T:main}]
Let $\eta$ be a topological generator of $H$. 
Since $K$ is of characteristic $p$, the actions of $\eta^p-1$ and $(\eta-1)^p$ on $M$ coincide;
consequently, the various hypotheses are preserved by replacing $H$ with an open subgroup.
In the other direction, 
we are free to prove the desired conclusion after replacing $\Gamma$ with an open normal subgroup $\Gamma_0$, as we may use the Hochschild--Serre spectral sequence to transfer the conclusion from $\Gamma_0$ back to $\Gamma$.
In particular, we may assume that:
\begin{itemize}
\item
$\Gamma$ itself is torsion-free and pro-$p$;
\item
 there exists an analytic coordinate chart of $\Gamma$
for which the conclusion of Lemma~\ref{L:commutators} holds with $j=0$
(see Remark~\ref{R:value of j}); and
\item
$\left\| \eta - 1 \right\|_M \in (0,1)$, where $\left\| \eta-1 \right\|_M$ denotes the operator norm of $\eta-1$ acting on $M$.
\end{itemize}

Since the map $\eta-1\colon M \to M$ has finite-dimensional cokernel, by Corollary~\ref{C:open mapping finite cokernel} it is strict.
In case (a), this means that $\eta-1$ has a bounded inverse $g$.
In case (b), 
Corollary~\ref{C:split surjection} implies the existence of a map $g\colon M \to M$ such that $(\eta-1) \circ g - \id_M$ and
$g \circ (\eta-1) - \id_M$ both have finite rank. 

Define $c_0 > 1$ as in Definition~\ref{D:c-analytic}, fix a nonnegative integer $l$ such that
\begin{equation} \label{eq:set up bound}
\max\{1, \left\| g \right\|_M\}^2 c_0^{-p^{l}} < 1,
\end{equation}
and put $c_l := c_0^{p^l}$.
Then for each cochain $f_n \in C^n_{\an,c_l}(\Gamma_l, M)$, there exists $m>0$ such that
\begin{equation} \label{eq:analytic cochain formula}
|f_n(\gamma_1,\dots,\gamma_n) - f_n(\gamma_1 \sigma_1,\dots,\gamma_n \sigma_n)| \leq m c_0^{-p^{\min\{i_1,\dots,i_n\}}}
\quad (\gamma_j \in \Gamma_l; i_j \geq l; \sigma_j \in \Gamma_{i_j}).
\end{equation}
As in Definition~\ref{D:c-analytic}, we obtain the correct topology on $C^n_{\an,c_l}(\Gamma_l, M)$ from the norm taking $f_n$ to the maximum of the supremum norm of $f_n$ and the minimum value of $m$ for which \eqref{eq:analytic cochain formula} holds.

By Theorem~\ref{T:Lazard char p}, we have
$H^i_{\cont}(\Gamma_l, M) \cong H^i_{\an,c_l}(\Gamma_l, M)$;
more precisely, the inclusion $\iota\colon C^\bullet_{\an,c_l}(\Gamma_l, M) \to C^\bullet_{\cont}(\Gamma_l,M)$ admits a section $s$
which composes with the inclusion to yield a map on $C^\bullet_{\cont}(\Gamma_l, M)$ homotopic to the identity.
In particular, $\iota$ induces surjective maps on cohomology groups; it thus suffices to check that
$C^\bullet_{\an,c_l}(\Gamma_l, M)$ has cohomology groups which are zero in case (a) or finite-dimensional in case (b),
as this will then imply the same for the groups $H^i_{\cont}(\Gamma_l, M)$ and (by Hochschild--Serre again) for the groups $H^i_{\cont}(\Gamma, M)$.

For $i \geq l$, define the chain homotopy $h_i$ from $C^\bullet_{\an,c_l}(\Gamma_l, M)$ to $C^\bullet_{\cont}(\Gamma_l, M)$ as follows:
for $f_n \in C^n_{\an,c_l}(\Gamma_l, M)$, set
\[
h_i(f_n)(\gamma_1,\dots,\gamma_{n-1}) = g^{p^i} \sum_{j=1}^n (-1)^{j-1} f_n(\gamma_1,\dots,\gamma_{j-1}, \eta^{p^i}, \gamma_j, \dots, \gamma_{n-1}). 
\]
We analyze this construction by computing that
\begin{align}
&(d \circ h_i + h_i \circ d - \iota)(f_n)(\gamma_1,\dots,\gamma_n) \nonumber \\
&\qquad = 
(\gamma_1 g^{p^i} - g^{p^i} \gamma_1) \sum_{j=1}^n (-1)^{j-1} f_n(\gamma_2, \dots, \gamma_j, \eta^{p^i}, \gamma_{j+1}, \dots, \gamma_n) \label{eq:homotopy commutator1}  \\
&\qquad \qquad - \sum_{j=1}^n g^{p^i} \left(f_n(\gamma_1,\dots,\gamma_{j-1},\eta^{p^i}\gamma_j, \gamma_{j+1},\dots,\gamma_n) \right. 
\nonumber \\
&\qquad \qquad \qquad \left. - f_n(\gamma_1,\dots,\gamma_{j-1},\gamma_j \eta^{p^i}, \gamma_{j+1},\dots,\gamma_n)\right) \label{eq:homotopy commutator2} \\
&\qquad \qquad + (g^{p^i} \eta^{p^i} - g^{p^i} - 1) f_n(\gamma_1, \dots, \gamma_n). \label{eq:homotopy commutator3}
\end{align}
To analyze the quantity \eqref{eq:homotopy commutator1}, note that the difference between
\[
\gamma_1 g^{p^i} - g^{p^i} \gamma_1 \qquad \mbox{and} \qquad g^{p^i} (\eta^{p^i} \gamma_1)(1 - \gamma_1^{-1} \eta^{-p^i} \gamma_1 \eta^{p^i}) g^{p^i}
\]
is zero in case (a) and of finite rank in case (b), and $\gamma_1^{-1} \eta^{-p^i} \gamma_1 \eta^{p^i} \in \Gamma_{i+k}$ if $\gamma_1 \in \Gamma_k$;
then use \eqref{eq:uniform analytic action}.
To analyze \eqref{eq:homotopy commutator2}, note that again $\gamma_j^{-1} \eta^{-p^i} \gamma_j \eta^{p^i} \in \Gamma_{i+k}$ if $\gamma_j \in \Gamma_k$,
then apply \eqref{eq:analytic cochain formula}. To analyze \eqref{eq:homotopy commutator3}, note that $g^{p^i} \eta^{p^i} - g^{p^i} - 1$ is zero in case (a) or of finite rank in case (b).
We deduce that $s \circ (d \circ h_i + h_i \circ d - \iota)$ equals zero (in case (a)) or a map of finite rank (in case (b)) plus
a map of operator norm at most
\begin{equation} \label{eq:bound on operator norm}
\max\{e,1\} \left\| s \right\|_M \max\{1, \left\| g \right\|_M\}^{2p^i} c^{-p^{i+l}}
\end{equation}
(where $e$ is as defined in \eqref{eq:uniform analytic action});
this tends to 0 as $i \to \infty$ by virtue of \eqref{eq:set up bound}.
We deduce that for $i \gg 0$, the map 
\[
s \circ (d \circ h_i + h_i \circ d - \iota) = d \circ s \circ h_i + s \circ h_i \circ d - 1\colon C^\bullet_{\an,c_l}(\Gamma_l, M) \to C^\bullet_{\an,c_l}(\Gamma_l, M)
\]
can be written as $\varphi+\epsilon$ where $\varphi$ has operator norm less than 1 and $\epsilon$ is zero (in case (a)) or a map of finite rank (in case (b)).

Now the map $1+\varphi\colon C^\bullet_{\an,c_l}(\Gamma_l, M) \to C^\bullet_{\an,c_l}(\Gamma_l, M)$ is an isomorphism of Banach spaces, so it induces an isomorphism of cohomology groups,
but is also homotopic to $\epsilon$. This 
implies the desired result in both cases.
\end{proof}

\section{Related questions}

We end by formulating a number of questions related to our main result. We start with a proposed generalization of Theorem~\ref{T:main} in which we weaken the procyclic hypothesis on $H$.

\begin{conj} \label{conj:char p analytic conjecture}
Let $K$ be a nonarchimedean field of characteristic $p$.
Let $\Gamma$ be a compact $p$-adic Lie group.
Let $H$ be a pro-nilpotent closed subgroup of $\Gamma$.
Let $M$ be a Banach space over $K$ equipped with an action of $\Gamma$ which is continuous and analytic.
\begin{enumerate}
    \item[(a)] Suppose that there exists a sequence of subgroups $H_j$ of $H$ forming a neighborhood basis of the identity such that $H^i_{\cont}(H_j, M) = 0$ for all $i \geq 0$ and all $j$.
    Then $H^i_{\cont}(\Gamma, M) = 0$ for all $i \geq 0$.
    \item[(b)] Suppose that $\Gamma$ acts $K$-linearly and that 
     there exists a sequence of subgroups $H_j$ of $H$ forming a neighborhood basis of the identity such that $\dim_K H^i_{\cont}(H_j, M) < \infty$ for all $i \geq 0$ and all $j$.
    Then $\dim_K H^i_{\cont}(\Gamma, M) < \infty$ for all $i \geq 0$.
\end{enumerate}
\end{conj}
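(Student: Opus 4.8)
The plan is to follow the architecture of the proof of Theorem~\ref{T:main} as closely as possible, replacing the single approximate inverse $g$ of $\eta-1$ by a system of homotopy operators attached to a chosen set of topological generators of $H$. As before, I would first reduce to the case where $\Gamma$ and $H$ are torsion-free and pro-$p$, fix an analytic coordinate chart for which Lemma~\ref{L:commutators} holds with $j=0$, and pass freely between $\Gamma$ and its open normal subgroups via Hochschild--Serre. The goal remains to produce, on the analytic cochain complex $C^\bullet_{\an,c}(\Gamma_l,M)$ of some small open subgroup, a chain homotopy $h$ for which $d\circ h + h\circ d - \id$ decomposes as a map of operator norm $<1$ plus a map that is zero (case (a)) or of finite rank (case (b)). The conclusion then follows exactly as at the end of the proof of Theorem~\ref{T:main}: the induced isomorphism $\id+\varphi$ of the cochain complex is homotopic to a map that is zero or of finite rank, forcing the cohomology to vanish or to be finite-dimensional respectively.

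The mechanism that makes the single-generator case work is the characteristic-$p$ identity $\eta^{p^i}-1 = (\eta-1)^{p^i}$, which lets one simultaneously push $\eta^{p^i}$ close to the identity (gaining the analytic smallness of \eqref{eq:uniform analytic action}) and keep $g^{p^i}(\eta^{p^i}-1)\approx 1$. To generalize, I would first use the finiteness hypothesis to manufacture a parametrix: since $H^i_{\cont}(H_j,M)$ is finite-dimensional for all $j$, the complex computing these groups (via Theorem~\ref{T:Lazard char p}, or a Koszul-type complex when $H$ is abelian) is strict by Corollary~\ref{C:finite-dimensional cohomology means strict}, and Corollary~\ref{C:split surjection1} supplies homotopy operators splitting it modulo finite rank. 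The essential point to extract is a family of bounded operators $g_1,\dots,g_d$ playing the role of approximate inverses to $\eta_1-1,\dots,\eta_d-1$ along each generating direction, with uniform control on their norms as one descends to the subgroups $H_j$.

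With such operators in hand, I would assemble $h$ as an alternating (Koszul-style) sum over the $d$ directions of the single-generator homotopies, inserting $\eta_a^{p^i}$ and prepending $g_a^{p^i}$ in the $a$-th direction. The error terms should then split as in \eqref{eq:homotopy commutator1}--\eqref{eq:homotopy commutator3}: commutators $[\gamma,\eta_a^{p^i}]$ are driven deep into the filtration by Lemma~\ref{L:commutators} and killed by analyticity, while the diagonal terms reduce to $g_a^{p^i}(\eta_a^{p^i}-1)-1$, which are zero or of finite rank by construction, and choosing $i\gg 0$ makes the total norm contribution $<1$.

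The main obstacle is the non-commutativity of the generators when $H$ is merely nilpotent rather than abelian. In that case the operators $\eta_a-1$ do not commute, the naive Koszul sum fails to be a genuine chain homotopy, and the cross terms produced by $d\circ h+h\circ d$ no longer cancel cleanly; one is instead forced to organize the argument as an induction that peels off a central procyclic subgroup $C\subseteq Z(H)$ at each stage, with base case Theorem~\ref{T:main}. The serious difficulty is that $C$ need not be normal in $\Gamma$, so one cannot simply invoke a Hochschild--Serre spectral sequence for $C$ in $\Gamma$ to descend to $\Gamma/C$ and to the smaller pro-nilpotent subgroup $H/C$. The descent would have to be effected at the level of the homotopy itself, and controlling the interaction between the parametrix on $M^C$, the residual $\Gamma$-action, and the analytic estimates is precisely the step I do not see how to carry out in general --- which is presumably why the assertion is recorded here as a conjecture rather than a theorem.
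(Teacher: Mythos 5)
The statement you are addressing is recorded in the paper as a \emph{conjecture}: the paper offers no proof of it, and indeed its own Theorem~\ref{T:main} covers only the case where the subgroup is pro-$p$ procyclic. So there is no proof in the paper to compare yours against, and the relevant question is whether your proposal closes the gap between the theorem and the conjecture. It does not, and you say so yourself in your final paragraph; your text is a plan plus an honest admission that the decisive step is missing. To be concrete about where it breaks: (i) the Koszul-style assembly of single-generator homotopies is already problematic when $H$ is abelian of rank $\geq 2$, because the parametrices $g_a$ supplied by Corollary~\ref{C:split surjection} are only quasi-inverses of $\eta_a - 1$ modulo finite rank, and they need not commute (even modulo finite rank, and certainly not with controlled norms) with the other operators $\eta_b - 1$ and $g_b$; the cross terms in $d \circ h + h \circ d - \id$ therefore do not reduce to the three manageable types \eqref{eq:homotopy commutator1}--\eqref{eq:homotopy commutator3}, and no analogue of the norm bound \eqref{eq:bound on operator norm} is available. (ii) The hypothesis of the conjecture quantifies over a neighborhood basis $H_j$ but gives no uniformity: the parametrices attached to generators of $H_j$ can have operator norms growing without bound in $j$, whereas your strategy (like the proof of Theorem~\ref{T:main}, via \eqref{eq:set up bound}) needs a single bound that beats the analytic decay $c_0^{-p^i}$. (iii) The fallback induction that peels off a central procyclic subgroup $C \subseteq Z(H)$ stalls exactly where you say it does: $C$ is not normal in $\Gamma$, so Hochschild--Serre is unavailable on the $\Gamma$ side, and no mechanism is offered for descending the homotopy estimates from $M$ to anything like $M^C$ while retaining an analytic $\Gamma$-action.

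In short, your proposal correctly reconstructs the architecture of the paper's Theorem~\ref{T:main} and correctly diagnoses why that architecture does not extend, but it proves nothing beyond the procyclic case already in the paper. That is the appropriate outcome here --- a complete argument would settle an open conjecture --- but the write-up should be framed as an analysis of obstructions, not as a proof proposal: as it stands, even the first ``abelian parametrix'' stage is not a valid argument, for the reasons in (i) and (ii) above.
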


\begin{rmk}
Note that in the case of Conjecture~\ref{conj:char p analytic conjecture} where $H$ is pro-$p$ procyclic, the hypothesis of either (a) or (b) is true as soon as it is true with $H_j = H$. This explains why Theorem~\ref{T:main} does not include a hypothesis quantified over a neighborhood basis of the identity in $H$.
\end{rmk}

We can also formulate an analogous conjecture in mixed characteristic.

\begin{conj} \label{conj:char 0 analytic conjecture}
Let $K$ be a nonarchimedean field of mixed characteristics $(0,p)$.
Let $\Gamma$ be a compact $p$-adic Lie group.
Let $H$ be a pro-nilpotent closed subgroup of $\Gamma$.
Let $M$ be a Banach space over $K$ equipped with a $K$-linear action of $\Gamma$ which is continuous and analytic.
\begin{enumerate}
    \item[(a)] Suppose that there exists a sequence of subgroups $H_j$ of $H$ forming a neighborhood basis of the identity such that $H^i_{\cont}(H_j, M) = 0$ for all $i \geq 0$ and all $j$.
    Then $H^i_{\cont}(\Gamma, M) = 0$ for all $i \geq 0$.
    \item[(b)] Suppose that there exists a sequence of subgroups $H_j$ of $H$ forming a neighborhood basis of the identity such that $\dim_K H^i_{\cont}(H_j, M) < \infty$ for all $i \geq 0$ and all $j$.
    Then $\dim_K H^i_{\cont}(\Gamma, M) < \infty$ for all $i \geq 0$.
\end{enumerate}
\end{conj}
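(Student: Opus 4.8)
The plan is to reduce this mixed-characteristic statement to the Lie algebra cohomology conjecture (Conjecture~\ref{conj:lie algebra cohom}) by means of a Lazard-type comparison isomorphism, which is available precisely because $K$ now has characteristic $0$. Write $\frakg$ for the $\QQ_p$-Lie algebra attached to $\Gamma$ and $\frakh \subseteq \frakg$ for the Lie subalgebra attached to $H$; since $H$ is pro-nilpotent, $\frakh$ is nilpotent. The organizing observation is that open subgroups of $H$ all have the same Lie algebra, so once continuous cohomology is replaced by Lie algebra cohomology the hypothesis quantified over the neighborhood basis $\{H_j\}$ collapses to a single statement about $\frakh$; this is the characteristic-$0$ reason for including that quantifier.

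First I would pass to an open normal uniform subgroup $\Gamma_0 \subseteq \Gamma$ and transfer the conclusion back to $\Gamma$ by the Hochschild--Serre spectral sequence, exactly as in the proof of Theorem~\ref{T:main} (here the transfer is especially clean since $\Gamma/\Gamma_0$ is a finite $p$-group and $K$ has characteristic $0$). Arranging that a cofinal subfamily of the $H_j$ is uniform as well, the aim on each such uniform group $G$ is a comparison isomorphism
\[
H^i_{\cont}(G, M) \cong H^i(\mathrm{Lie}(G), M)
\]
compatible with restriction along $\frakh \hookrightarrow \frakg$. Granting this, the hypothesis becomes the assertion that $H^i(\frakh, M)$ vanishes (resp. is finite-dimensional) for all $i$, and the desired conclusion becomes the same assertion for $H^i(\frakg, M)$; this is exactly Conjecture~\ref{conj:lie algebra cohom}.

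To attack the Lie algebra statement itself one cannot simply invoke Hochschild--Serre, because $\frakh$ need not be an ideal of $\frakg$. Instead I would work along the idealizer chain $\frakh \subseteq N_{\frakg}(\frakh) \subseteq \cdots$, at each step using that $\frakh$ is an ideal of the next idealizer $\mathfrak{n}$ to run the Lie algebra Hochschild--Serre spectral sequence $H^p(\mathfrak{n}/\frakh, H^q(\frakh, M)) \Rightarrow H^{p+q}(\mathfrak{n}, M)$ and propagate finiteness one layer at a time, the base case being the two-term Koszul complex $M \xrightarrow{X} M$ for a single central $X \in \frakh$. In the situation where $(\frakg,\frakh)$ forms a reductive pair with $\frakh$ a nilradical, this propagation is precisely what Kostant's theorem (Theorem~\ref{thm:Kostant}) supplies.

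The main obstacle is the comparison isomorphism in the infinite-dimensional analytic setting: Lazard's isomorphism is classical for finite-dimensional coefficients, and extending it to a Banach module $M$ carrying an analytic but neither unitary nor finite action requires controlling convergence of the relevant cochain complexes and showing that the comparison map is a \emph{strict} quasi-isomorphism, so that it survives completed base change and interacts correctly with the subquotient topologies of Section~2. A second, independent obstacle is that the idealizer chain need not terminate at $\frakg$ for a general nilpotent $\frakh$, so outside the reductive-pair case covered by Kostant one must find another route from $N_{\frakg}(\frakh)$ up to $\frakg$; this is exactly where I expect the genuine difficulty --- and the reason the statement remains conjectural --- to reside.
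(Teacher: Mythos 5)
The statement you are addressing is a \emph{conjecture}: the paper offers no proof of it, only the remark that, by adapting Lazard's argument as in Theorem~\ref{T:Lazard char p}, it should be possible to \emph{reduce} Conjecture~\ref{conj:char 0 analytic conjecture} to the Lie-algebra statement (Conjecture~\ref{conj:lie algebra cohom}), which itself remains open with Kostant's theorem cited only as evidence for part (a). Your overall plan --- pass to uniform subgroups, compare continuous cohomology with Lie algebra cohomology, then work on the Lie algebra side --- follows exactly the route the paper suggests, and you are right to flag that both halves are unproven. So what you have is a reduction strategy rather than a proof, which is the most that could be expected; the issue is that two of your intermediate claims are not merely unproven but problematic as stated.

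First, the comparison isomorphism $H^i_{\cont}(G,M) \cong H^i(\mathrm{Lie}(G),M)$ for each individual uniform subgroup $G$ is too strong, and it contradicts the paper's own remark immediately following the conjecture: if such an isomorphism held for a single $H_j$, then the hypothesis for that one subgroup would already be equivalent to the Lie-algebra hypothesis, making the quantification over a neighborhood basis redundant; the paper explains that in mixed characteristic this quantification is genuinely needed even for $H$ procyclic, precisely because $\gamma^p - 1 \neq (\gamma-1)^p$. The correct expectation involves $G$-invariants or a colimit over open subgroups (already for an action factoring through a finite quotient, the Lie algebra acts trivially while $G$ does not, so the two $H^0$'s differ), and this is why your claim that the quantifier ``collapses'' cannot be right as formulated. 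Second, your idealizer-chain induction for the Lie algebra conjecture breaks down at the very first step in basic examples: a Cartan subalgebra $\frakh$ of a semisimple $\frakg$ is nilpotent and self-normalizing, so $N_{\frakg}(\frakh) = \frakh$ and the chain never moves; this is not a boundary case but the typical one, and it is where the genuine difficulty of Conjecture~\ref{conj:lie algebra cohom} lives. Relatedly, Kostant's theorem is a non-vanishing statement about the nilradical, usable as evidence for part (a) of the conjecture, but it is not a finiteness-propagation mechanism and cannot play the role your induction assigns to it.
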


\begin{rmk}
In contrast with Conjecture~\ref{conj:char p analytic conjecture},
in Conjecture~\ref{conj:char 0 analytic conjecture} the quantification over a neighborhood basis is required even in the case where $H$ is pro-$p$ procyclic. This comes down to the fact that in mixed characteristic we have $\gamma^p - 1 \neq (\gamma-1)^p$.
\end{rmk}

By adapting the proof of \cite[Th\'eor\`eme V.2.3.10]{Lazard} as in Theorem~\ref{T:Lazard char p}, it should be possible to reduce Conjecture~\ref{conj:char 0 analytic conjecture} to a corresponding statement at the level of Lie algebras. In order to formulate the conjecture, we first recall the definition of Lie algebra cohomology.

\begin{defn}
Let $\frakg$ be a Lie algebra over a field $K$ of characteristic $0$.
A (left) $\frakg$-module is defined as a left module for the universal enveloping algebra of $\frakg$ over $K$.
For $M$ a $\frakg$-module, the \emph{cohomology groups} $H^i(\frakg, M)$ are defined as the values on $M$
of the right derived functors of the functor of $\frakg$-invariants:
\[
M \mapsto \{m \in M\colon xm = 0 \quad \forall x \in \frakg\}.
\]
By analogy with group cohomology, these functors can be computed using a suitable complex of cochains called the
\emph{Chevalley--Eilenberg complex}. 

Let $\frakh \subseteq \frakg$ be a normal inclusion of Lie algebras
(meaning that the Lie bracket on $\frakg$ induces a well-defined map on the quotient $\frakg/\frakh$)
and let $M$ be a $\frakg$-module.
In analogy with the corresponding construction for groups, Hochschild and Serre \cite{HochschildSerreLie}
constructed a spectral sequence of the form
\[
E_2^{p,q} = H^p(\frakg/\frakh, H^q(\frakh, M)) \Longrightarrow H^{p+q}(\frakg, M).
\]
From the existence of this spectral sequence,
it follows that if $\frakh \subseteq \frakg$ is a subnormal inclusion and $M$ is a $\frakg$-module such that
$H^i(\frakh, M)$ is zero (resp.\ finite-dimensional) for all $i \geq 0$, then
$H^i(\frakg, M)$ is zero (resp.\ finite-dimensional) for all $i \geq 0$.
\end{defn}

\begin{conj} \label{conj:lie algebra cohom}
Let $K$ be a nonarchimedean field of mixed characteristics $(0,p)$.
Let $\frakg$ be a finite-dimensional Lie algebra over $K$.
Let $\frakh$ be a nilpotent Lie subalgebra of $\frakg$.
Let $M$ be a Banach space over $K$ equipped with a $K$-linear action of $\frakg$.
\begin{enumerate}
    \item[(a)] Suppose that $H^i(\frakh, M) = 0$ for all $i \geq 0$.
    Then $H^i(\frakg, M) = 0$ for all $i \geq 0$.
    \item[(b)] Suppose that $\dim_K H^i(\frakh, M) < \infty$ for all $i \geq 0$.
    Then $\dim_K H^i(\frakg, M) < \infty$ for all $i \geq 0$.
\end{enumerate}
\end{conj}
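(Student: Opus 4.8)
The core tool is already recorded in the paper: for a subnormal inclusion $\frakh \subseteq \frakg$ of finite-dimensional Lie algebras, iterating the Hochschild--Serre spectral sequence shows that if every $H^i(\frakh, M)$ is zero (resp.\ finite-dimensional) then so is every $H^i(\frakg, M)$. Indeed, at each normal step, with $\mathfrak{k}$ an ideal of $\mathfrak{k}'$, the $E_2$-page $H^p(\mathfrak{k}'/\mathfrak{k}, H^q(\mathfrak{k}, M))$ is the cohomology of the finite-dimensional Lie algebra $\mathfrak{k}'/\mathfrak{k}$ with finite-dimensional (resp.\ zero) coefficients, hence finite-dimensional (resp.\ zero), and only finitely many bidegrees contribute in each total degree. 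Thus the conjecture is immediate whenever $\frakh$ is subnormal in $\frakg$. The entire difficulty is that a nilpotent subalgebra need not be subnormal: a Cartan subalgebra of a semisimple $\frakg$ is nilpotent and self-normalizing, hence not subnormal and not reachable this way.

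My first move is therefore to enlarge $\frakh$ as far as the subnormal engine permits. Form the normalizer tower $\frakh = \mathfrak{n}_0 \subset \mathfrak{n}_1 \subset \cdots$, with $\mathfrak{n}_{j+1} = N_{\frakg}(\mathfrak{n}_j)$ and strict inclusions until it stabilizes; each $\mathfrak{n}_j$ is by construction an ideal of $\mathfrak{n}_{j+1}$, so $\frakh$ is subnormal in the subalgebra $\mathfrak{q}$ at which the tower stabilizes, and the hypothesis propagates from $\frakh$ to $\mathfrak{q}$. By construction $\mathfrak{q}$ is self-normalizing: a Borel subalgebra when $\frakh$ is the nilradical of one, but possibly as small as a Cartan subalgebra. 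In parallel I would simplify the ground field. The Chevalley--Eilenberg complex $\wedge^{\bullet} \frakg^{*} \otimes_K M$ is a complex of Banach spaces over $K$ (each term a finite direct sum of copies of $M$) whose cohomology is finite-dimensional in the cases at issue, hence strict by Corollary~\ref{C:finite-dimensional cohomology means strict}; Lemma~\ref{L:cohomology after base change} and Corollary~\ref{C:kernel of base extension} then allow base change to the completed algebraic closure of $K$ without altering the truth of either hypothesis or conclusion, so I may assume $\frakg$ is split. One would further like to exploit the normal inclusion of the solvable radical to reduce to semisimple $\frakg$, though controlling the position of $\frakh$ relative to the radical is itself delicate.

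The main obstacle is the final passage from the self-normalizing $\mathfrak{q}$ to $\frakg$: this is precisely the non-subnormal inclusion the engine cannot treat, and it carries the whole content of the conjecture. This is where Theorem~\ref{thm:Kostant} enters. In the classical situation where $\frakg$ is semisimple and $\frakh$ is the nilradical of a parabolic subalgebra, Kostant's explicit description of $H^{\bullet}(\frakh, M)$ as a module over $\mathfrak{q}/\frakh$ is sharp enough to determine $H^{\bullet}(\frakg, M)$ and yields the conjecture outright. I expect that the crux of a general argument is a Banach-coefficient analogue of Kostant's theorem --- equivalently, the assertion that the restriction map $H^{\bullet}(\frakg, M) \to H^{\bullet}(\mathfrak{q}, M)$ is injective with strict image --- after which the reductions above would place an arbitrary nilpotent $\frakh$ inside an arbitrary $\frakg$ within Kostant's framework. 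Making this survive for infinite-dimensional $M$, where the Hodge-theoretic inputs to Kostant's proof are unavailable, is the step I expect to be genuinely hard.
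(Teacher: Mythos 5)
There is a genuine gap here, and in fact the statement you are trying to prove is not proved in the paper at all: it is Conjecture~\ref{conj:lie algebra cohom}, stated as an open problem. The paper offers only \emph{evidence} for it, namely Theorem~\ref{thm:Kostant}, which concerns finite-dimensional representations and the nilradical case, and the observation (in the definition of Lie algebra cohomology) that the subnormal case follows from the Hochschild--Serre spectral sequence. So no complete argument exists to compare yours against, and yours is not complete either --- by your own admission.

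Your reductions are sensible and consistent with the paper's toolkit: the subnormal propagation via iterated Hochschild--Serre is exactly the paper's remark; the normalizer tower $\frakh = \mathfrak{n}_0 \subset \mathfrak{n}_1 \subset \cdots$ does place $\frakh$ subnormally inside a self-normalizing subalgebra $\mathfrak{q}$; and the base-change reduction via Corollary~\ref{C:finite-dimensional cohomology means strict}, Corollary~\ref{C:kernel of base extension}(c), and Lemma~\ref{L:cohomology after base change} is a legitimate use of the functional-analytic results of Section~2. But all of this only repackages the problem: the entire content of the conjecture sits in the non-subnormal inclusion $\mathfrak{q} \subseteq \frakg$, and there you have no argument. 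Your appeal to Kostant is weaker than you suggest even in the classical setting: Theorem~\ref{thm:Kostant} is a nonvanishing statement for \emph{nonzero finite-dimensional} $V$, which settles part (a) only in the special case of the nilradical of a Borel with finite-dimensional coefficients, says nothing about part (b), and --- as you yourself note --- its proof (Hodge-theoretic/Casimir methods) has no known extension to infinite-dimensional Banach modules $M$, which is the regime the conjecture is actually about. Asserting that the crux is ``a Banach-coefficient analogue of Kostant's theorem'' that you expect to be ``genuinely hard'' is a statement of the problem, not a proof of it.
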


As further evidence in favor of Conjecture~\ref{conj:lie algebra cohom}, particularly part (a), we cite the following result.

\begin{thm}[Kostant] \label{thm:Kostant}
Let $\frakh$ be the nilpotent radical of a semisimple Lie algebra $\frakg$ over a field of characteristic $0$. Then for every nonzero finite-dimensional representation $V$ of $\frakg$, $\bigoplus_i H^i(\frakh, V) \neq 0$.
\end{thm}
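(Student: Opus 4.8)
The plan is to extract the nonvanishing claim from the degree-zero cohomology alone, which already suffices and sidesteps the full strength of Kostant's computation. Interpreting the nilpotent radical $\frakh$ as the nilradical $\bigoplus_{\alpha > 0} \frakg_\alpha$ of a Borel subalgebra, recall that the sharp form of Kostant's theorem identifies $H^i(\frakh, V)$, as a module over a Cartan subalgebra, with the sum of the weight lines $w(\lambda + \rho) - \rho$ over Weyl group elements $w$ of length $i$ (for $V = V_\lambda$ irreducible of highest weight $\lambda$, with $\rho$ the half-sum of the positive roots). For the bare assertion $\bigoplus_i H^i(\frakh, V) \neq 0$, however, it is enough to exhibit a single nonzero class, and the cleanest place to find one is in $H^0(\frakh, V) = V^{\frakh}$, the space of $\frakh$-invariants, which by definition sits inside the total cohomology. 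So I would reduce the theorem to the statement that $V^{\frakh} \neq 0$ for every nonzero finite-dimensional $V$.

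The key mechanical point is that $\frakh$ acts on $V$ by nilpotent operators. Writing $\frakh = \bigoplus_{\alpha > 0} \frakg_\alpha$, each positive root vector carries the weight space $V_\mu$ into $V_{\mu + \alpha}$; since a finite-dimensional representation has only finitely many weights and positive roots strictly increase the relevant height, every element of $\frakh$ acts as a strictly weight-raising, hence nilpotent, endomorphism of $V$. (Equivalently, one invokes the abstract Jordan decomposition: an ad-nilpotent element of a semisimple Lie algebra acts nilpotently in every finite-dimensional representation, a feature available precisely because the characteristic is zero.) With nilpotency in hand, Engel's theorem applies: a Lie algebra acting on a nonzero finite-dimensional space by nilpotent operators admits a nonzero common null vector. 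Hence $V^{\frakh} \neq 0$, and the theorem follows. As an alternative I could first decompose $V$ into irreducibles via Weyl complete reducibility and observe that the highest-weight line of each summand is annihilated by $\frakh$; this again uses characteristic zero in an essential way, but the Engel argument is more robust since it avoids any appeal to the highest-weight machinery.

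I do not expect any genuine obstacle here: the real depth of Kostant's theorem lies entirely in its sharp form --- the precise Weyl-group indexing of the higher cohomology, established via a Casimir/Laplacian computation identifying harmonic cochains --- none of which is needed for nonvanishing. The only point requiring a moment's care is confirming that $\frakh$ acts nilpotently, after which everything is formal. It is worth emphasizing that the two inputs actually used, namely that $\frakh$ acts nilpotently and that we work in characteristic zero (forcing $V^{\frakh} \neq 0$), are exactly the features that fail in positive characteristic, which is consistent with the fact that Conjecture~\ref{conj:lie algebra cohom} is posed only over fields of mixed characteristics $(0,p)$ and mirrors the characteristic-zero setting of its group-theoretic companion Conjecture~\ref{conj:char 0 analytic conjecture}.
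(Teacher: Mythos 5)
Your proof is correct, and it takes a genuinely different --- and more elementary --- route than the paper's. The paper first reduces to the case of irreducible $V$ using semisimplicity of $\frakg$, and then invokes Kostant's explicit computation of $H^i(\frakh, V)$ as a direct sum of weight lines $w(\lambda+\rho)-\rho$ indexed by Weyl group elements $w$ of length $i$; nonvanishing is then read off from that description (already from the $w=1$ term). You bypass Kostant's theorem entirely: since every element of $\frakh$ acts nilpotently on $V$ (by your weight-raising argument, or by preservation of the abstract Jordan decomposition under finite-dimensional representations in characteristic zero), Engel's theorem yields a nonzero common null vector, so $H^0(\frakh,V)=V^{\frakh}\neq 0$ and hence the total cohomology is nonzero; no reduction to the irreducible case or appeal to highest-weight theory is needed. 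What the paper's route buys is the sharp form of the statement --- the full Weyl-group indexing, which is the real content of Kostant's theorem and is arguably the stronger evidence for Conjecture~\ref{conj:lie algebra cohom}; what your route buys is self-containedness and a precise accounting of which characteristic-zero inputs are actually used, which, as you observe, matches the hypotheses under which Conjecture~\ref{conj:lie algebra cohom} is posed. Two minor points of care: the phrase ``nilpotent radical of a semisimple Lie algebra'' must indeed be read, as you do, as the nilradical of a Borel (or parabolic) subalgebra, since the radical of a semisimple algebra is zero; and over a base field of characteristic zero that is not algebraically closed, the root-space decomposition need not exist over $K$, so your Jordan-decomposition variant (or a base change to $\overline{K}$, which is harmless since Lie algebra cohomology commutes with field extension) is the safer of your two nilpotency arguments.
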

\begin{proof}
Since $\frakg$ is semisimple, we may reduce at once to the case where $V$ is irreducible.
In this case, the claim follows from Kostant's explicit calculation of the spaces $H^i(\frakh, V)$ \cite[Corollary~5.15]{Kostant}.
\end{proof}

\bibliographystyle{plain}
\bibliography{RefList}

\end{document}